\newcommand{\RR}{{\mathbb{R}}}
\newcommand{\CC}{{\mathbb{C}}}
\newcommand{\NN}{{\mathbb{N}}}
\newtheorem{remark}[theorem]{Remark}
\newtheorem{example}[theorem]{Example}
\begin{document}
\bibliographystyle{siam}

\pagestyle{myheadings}
\markboth{V.~Simoncini}{Adaptive rational Krylov method for Riccati equations}

\title{Analysis of the rational Krylov subspace projection method
 for large-scale algebraic Riccati equations 
\thanks{Version of January 31, 2016.}}
\author{
V. Simoncini\thanks{Dipartimento di Matematica,
Universit\`a di Bologna,
Piazza di Porta S. Donato  5, I-40127 Bologna, Italy and
IMATI-CNR, Pavia, Italy (valeria.simoncini@unibo.it).}
}
\maketitle

\begin{abstract}
In the numerical solution of the algebraic Riccati equation
$A^* X + X A - X BB^* X + C^* C =0$,
where $A$ is large, sparse and stable, and $B$, $C$ have low rank, projection
methods have recently emerged as a possible alternative to the more
established Newton-Kleinman iteration. 
In spite of convincing numerical experiments, a systematic matrix analysis
of this class of methods is still lacking.
We derive new relations for the approximate solution, the residual and the
error matrices, giving new insights into the role of the
matrix $A-BB^*X$ and of its approximations in the numerical procedure.  
The new results provide theoretical ground for recently proposed modifications
of projection methods onto rational Krylov subspaces.
\end{abstract}

\begin{keywords}
Riccati equation, rational Krylov, reduced order modelling
\end{keywords}

\begin{AMS}
47J20, 65F30, 49M99, 49N35, 93B52
\end{AMS}

\section{Introduction}
We consider the numerical solution of the algebraic Riccati equation
\begin{eqnarray}\label{eqn:main}
A^* X + X A - X BB^* X + C^* C =0,
\end{eqnarray}
where $A \in \RR^{n\times n}$ is large and sparse, and $B\in\RR^{n\times q}$, $C\in\RR^{p\times n}$
with $q,p\ll n$; here and in the following $A^*$ denotes the complex conjugate of $A$.
For $A$ stable\footnote{An $n\times n$ matrix is stable if all its eigenvalues are
in the open left half complex plane.}, the solution matrix $X$ of interest 
is the one that is symmetric positive semidefinite and such that 
$A-BB^*X$ remains stable. 
 Equation (\ref{eqn:main}) arises in many scientific and engineering
applications that require controlling a dynamical system, and it has been
deeply studied by applied algebraists and
numerical mathematicians; we refer the reader to \cite{Lancaster.Rodman.95}
for a thorough description of the problem and its many mathematical relations.
In the recent book \cite{Binietal.book.12}, the numerical treatment of this
and related problems has been discussed, both in the small and large scale cases.
In the large scale setting, with $n\gg 10^3$, a serious bottleneck is given
by the fact that the possibly dense $n\times n$ matrix $X$ cannot be stored. Most numerical
methods thus approximate $X$ by means of factored low-rank matrices, e.g., $X\approx ZZ^*$,
so that only $Z$ needs to be stored.
Different approaches have been explored to solve (\ref{eqn:main}) under this
constraint, and for quite some time a variant of the Newton method, the Newton-Kleinman
iteration, has been the most popular approach
\cite{Kleinman_68},\cite{FHS.09},\cite{Benner.Saak.10},\cite{Benner.Li.Penzl.08}.
Low-rank subspace iteration strategies have also been considered in the past few
years, see, e.g., 
\cite{Amodei.Buchot.10},\cite{Benner.Bujanovic.16},\cite{Lin.Simoncini.15}.
Other forms of data-sparse approximations include multilevel 
\cite{Grasedyck.08} and hierarchical \cite{Grasedyck.Hackbusch.Khoromskij.03} methods,
which  rely on available structure in the data.

Projection-type methods also yield low rank approximations, however they had not
been used for the Riccati equation until very recently. In fact,
projection methods are extensively employed in the solution of algebraic
linear systems and eigenvalue problems. In the past decade, specific choices
of approximation spaces have shown that projection methods are
particularly effective also for {\it linear} matrix equations such as
the Lyapunov and Sylvester equations \cite{Simoncini.survey13}. Lately,
the projection idea has been applied to the algebraic (quadratic)
Riccati equation \cite{Jbilou_03},\cite{Heyouni.Jbilou.09},
 with surprisingly good results, to the point that better performance is
often observed than with Newton-based procedures \cite{Simoncini.Szyld.Monsalve.13}.
Moreover, ad-hoc parameter selections have been proposed to further enhance
particularly effective approximation spaces \cite{Lin.Simoncini.15}.
This strong numerical evidence however is lacking of any theoretical justification:
the procedure is mainly based on its linear counterpart and therefore
it seems to completely disregard the quadratic term $-XBB^*X$. Nonetheless,
fast convergence to the sought after solution is usually observed.

The aim of this paper is to start an analysis that will lead to a better
understanding of this class of methods. By looking at the computed quantities
from different perspectives, we are able to give new insights into
the role of the approximate solution $X_k$ in the various contexts where
the Riccati equation is extensively studied. We start in section \ref{sec:mor}
with model order
reduction of linear dynamical systems, where approximation by projection is
a recognized important tool, and show that $X_k$ carries information
on the optimal function value in
the reduced control problem.  In section \ref{sec:RKSM} we deepen our knowledge
of $X_k$ and the associated residual, which allows us to derive new expressions
for the residual matrix and justify recently proposed enhancements of a popular space
in model order reduction, that is the rational Krylov
subspace.  A key role in our discussion will be played by the residual matrix,
\begin{eqnarray} \label{eqn:res} 
R_k := A^* X_k + X_k A - X_k BB^* X_k + C^* C  .
\end{eqnarray}
By simple algebra, it is customary to rewrite $R_k$ as
\begin{eqnarray} \label{eqn:resshift}
R_k=(A^*  - X_k BB^*) X_k + X_k (A - BB^* X_k) + C^* C + X_k BB^* X_k ,
\end{eqnarray}
which highlights the occurrence of the matrix $A^*  - X_k BB^*$. This matrix
and its projected version will be ubiquitous in the paper, and are the
true players in the approximation process.
Finally, the connection between the approximation of the matrix equation and the
invariant subspace setting is highlighted in section \ref{sec:invar}.
While our interest was motivated by the good performance of rational Krylov methods,
which are the main focus of section \ref{sec:RKSM},
many of the results in fact hold for more general projection methods.
We believe that our analysis helps provide good ground to characterize projection methods as a 
natural and effective strategy for solving the Riccati equation.

The following notation and definitions will be used. For $X\in \RR^{n\times n}$, $X\ge 0$
means that $X$ is symmetric and positive semidefinite, while $X >0$ means that is symmetric
and positive definite.  A stable matrix is a square matrix with all its eigenvalues in
the open left-half complex plane. 
An $n\times n$
 matrix $A$ is passive if its field of values, $\{z\in\CC\, : \, z=(x^*Ax)/(x^*x), \, 
0\ne x\in\CC^n\}$, is all in the open left-half complex plane. 
$I_n$ denotes the identity matrix of size $n$, and
the subscript will be avoided whenever clear from the context.
A pair $(A, B)$ is controllable if the matrix $[B,AB, \ldots, A^{n-1}B]$ is
full row rank, and $(C, A)$ is observable if $(A^*,C^*)$ is controllable.
A pair $(A,B)$ is stabilizable if there exists a matrix $X$ such that $A-BB^*X$ is stable.
The Euclidean norm $\|\cdot \|$ for vectors and its induced norm for matrices will be used,
together with the Frobenius norm for matrices, defined as $\|A\|_F^2=\sum_{i,j}|a_{i,j}|^2$,
where $A=(a_{i,j})$.

\section{Background on projection methods}
Projection methods usually generate a sequence of nested approximation spaces,
${\cal K}_k\subseteq {\cal K}_{k+1}$, $k \ge 1$, where
an approximate solution is determined. Let the columns of $V_k\in\RR^{n\times d_k}$ span
the space ${\cal K}_k$, where $d_k$ is the space dimension, with
$d_k \le d_{k+1}$. An approximation to $X$ in (\ref{eqn:main}) is
sought as $X_k = V_k Y_k V_k^* \approx X$, where $Y_k$ is determined by imposing
some additional condition. A Galerkin method is characterized by
an orthogonality condition of the residual to the given space, namely 
$R_k \perp {\cal K}_k$, where $R_k$ is as defined in (\ref{eqn:res});
the orthogonality is
with respect to the standard matrix inner product, so that the Galerkin condition reads
\begin{eqnarray}\label{eqn:galerkin}
V_k^* R_k V_k = 0.
\end{eqnarray}
As the subspace grows, the residual is forced to belong to a smaller and smaller
space. When $d_k = n$ then clearly it must be $R_k = 0$ and a solution to 
(\ref{eqn:main}) is determined, in exact arithmetic. 
The main goal is to determine a sufficiently good 
approximate solution $X_k$ for $d_k \ll n$. To obtain $Y_k$ we substitute $X_k$ into
the expression for the residual matrix in (\ref{eqn:galerkin}):
\begin{eqnarray*}
V_k^* ( A^* V_k Y_k V_k^*  + V_k Y_k V_k^* A  - V_k Y_k V_k^* BB^* V_k Y_k V_k^* + C^* C)V_k &=& 0 \\
V_k^*  A^* V_k Y_k   + Y_k V_k^* A V_k  - Y_k V_k^* BB^* V_k Y_k  + V_k^*C^* CV_k &=& 0,
\end{eqnarray*}
where we used that $V_k^*V_k=I_{d_k}$. Setting $T_k = V_k^* A V_k$, $B_k = V_k^*B$ and
$C_k^*=V_k^*C^*$ we see that $Y_k$ can be obtained by solving the reduced Riccati equation
\begin{eqnarray}\label{eqn:Yreduced}
T_k^* Y_k   + Y_k T_k  - Y_k B_kB_k^*  Y_k  + C_k^* C_k = 0 .
\end{eqnarray}
Under the assumption that $A$ is passive, $T_k$ is stable, therefore (\ref{eqn:Yreduced})
admits a unique stabilizing positive semidefinite solution $Y_k$, which is then used
for constructing $X_k$.

The effectiveness of the whole procedure depends on the choice of ${\cal K}_k$.
The approximation spaces explored in the (quite recent)
literature are all based on block Krylov subspaces
generated with $A$ or with rational functions of $A$ and starting term $C^*$
\cite{Jbilou_03},\cite{Heyouni.Jbilou.09},\cite{Simoncini.Szyld.Monsalve.13}.
In section~\ref{sec:RKSM} we will analyze the case of the block rational Krylov
subspace, while the results of the next two sections hold for any approximation
space.

\section{Order reduction of dynamical systems by projection}\label{sec:mor}
The Riccati equation is tightly connected with the time-invariant linear system
\begin{eqnarray}\label{eqn:sys}
\left\{ \begin{array}{l}
\dot x(t) = A x(t) + B u(t), \qquad x(0)=x_0 \\
y(t) = C x(t) , \end{array} \right .
\end{eqnarray}
where $u(t)$ and $x(t)$ are the control (or input) and state vectors, while $y(t)$
is the output vector; $x_0$ is the initial state. 
We note that $x(t)$ also depends
on both $x_0$ and $u(t)$, but this will not be explicitly reported in the notation.
Let us introduce the following quadratic cost 
functional\footnote{Here we consider a simplified version to make an immediate
connection with the Riccati equation stated in (\ref{eqn:main}).}
$$
{\cal J}(u,x_0) = \int_0^\infty (x(t)^*C^*C x(t) + u(t)^* u(t)) dt .
$$
The Riccati equation matrix $X$ is used in the solution of
the following linear-quadratic regulator problem:
$$
\inf_{u} {\cal J}(u,x_0) ,
$$
which consists in finding an {\it optimal control} function $u_*(t)$ associated with the system
(\ref{eqn:sys}),
at which the function ${\cal J}$ attains its infimum. The following well known result connects the
optimal cost problem with the solution of the algebraic Riccati equation (\ref{eqn:main});
see, e.g., the relevant part of \cite[Theorem 16.3.3]{Lancaster.Rodman.95} in our notation.

\vskip 0.1in
\begin{theorem}\label{th:JvsR}
Let the pair $(A,B)$ be stabilizable and $(C,A)$ observable. Then there is a
unique solution $X\ge 0$ of (\ref{eqn:main}). Moreover,

i) For each $x_0$ there is a unique optimal control, and it is given
by $u_*(t) = -B^*X \exp((A - BB^*X)t) x_0$ for $t\ge 0$;

ii) $J(u_*,x_0) = x_0^* X x_0$ for all $x_0 \in \CC^n$.
\end{theorem}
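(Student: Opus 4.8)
The plan is to prove this as a classical LQR result, following the standard completion-of-squares / verification argument. First I would establish existence and uniqueness of $X \ge 0$: since $(A,B)$ is stabilizable and $(C,A)$ is observable, the standard algebraic Riccati theory (as in \cite[Theorem 16.3.3]{Lancaster.Rodman.95}) guarantees a unique symmetric positive semidefinite stabilizing solution, i.e.\ one for which $A - BB^*X$ is stable. I would simply cite this, since the excerpt explicitly points to that reference and the paper's focus is elsewhere.

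For part (ii), the key algebraic identity is a completion of squares. Fix $x_0$ and any admissible control $u$ with corresponding state trajectory $x(t)$. Using the Riccati equation $A^*X + XA - XBB^*X + C^*C = 0$, I would compute
\begin{eqnarray*}
\frac{d}{dt}\big(x(t)^* X x(t)\big) &=& \dot x^* X x + x^* X \dot x \\
&=& (Ax + Bu)^* X x + x^* X (Ax + Bu) \\
&=& x^*(A^*X + XA)x + u^* B^* X x + x^* X B u \\
&=& -x^* C^* C x + x^* X B B^* X x + u^* B^* X x + x^* X B u \\
&=& -\big(x^* C^* C x + u^* u\big) + \big(u + B^* X x\big)^*\big(u + B^* X x\big),
\end{eqnarray*}
so that the integrand of $\mathcal{J}$ satisfies $x^* C^* C x + u^* u = -\frac{d}{dt}(x^* X x) + \|u + B^* X x\|^2$. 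Integrating from $0$ to $\infty$ and using that stabilizing admissible controls drive $x(t)^* X x(t) \to 0$ (this requires the trajectory to decay, which holds for the closed-loop control and, for the infimum, one restricts to controls making $\mathcal{J}$ finite, forcing $x(t) \to 0$ in the relevant subspace), I would obtain
$$
\mathcal{J}(u,x_0) = x_0^* X x_0 + \int_0^\infty \big\|u(t) + B^* X x(t)\big\|^2\, dt \ge x_0^* X x_0,
$$
with equality exactly when $u(t) = -B^* X x(t)$ pointwise.

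For part (i), I would then substitute the candidate feedback law $u_*(t) = -B^* X x(t)$ into the state equation, giving $\dot x = (A - BB^*X)x$, hence $x(t) = \exp((A-BB^*X)t)x_0$ and therefore $u_*(t) = -B^* X \exp((A-BB^*X)t)x_0$; since $A - BB^*X$ is stable, this trajectory decays exponentially, the boundary term vanishes, $\mathcal{J}(u_*, x_0) = x_0^* X x_0 < \infty$, and by the strict inequality above this $u_*$ is the \emph{unique} minimizer. Uniqueness of the optimal control follows because any other minimizer must make the nonnegative integral $\int_0^\infty \|u + B^*Xx\|^2 dt$ vanish, forcing $u = u_*$ almost everywhere.

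The main obstacle is the rigorous justification of the boundary term $\lim_{t\to\infty} x(t)^* X x(t) = 0$ and the associated admissibility/finiteness bookkeeping: one must argue that it suffices to take the infimum over controls yielding finite cost, that such controls (together with observability of $(C,A)$) force the state to decay appropriately, and that the class of competitors is rich enough that the closed-loop control is genuinely optimal. In a self-contained treatment this is where care is needed, but given that the excerpt cites \cite{Lancaster.Rodman.95} for the precise statement, I would handle this by appealing to that reference for the admissibility framework and present the completion-of-squares identity as the substantive computation.
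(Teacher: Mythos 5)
The paper does not prove this theorem at all: it is stated as a classical result and attributed directly to \cite[Theorem~16.3.3]{Lancaster.Rodman.95}, so there is no internal proof to compare against. Your proposal supplies the standard verification argument that underlies that cited theorem, and it is essentially correct. The completion-of-squares identity
$$
\frac{d}{dt}\bigl(x^* X x\bigr) = -\bigl(x^* C^* C x + u^* u\bigr) + \bigl(u + B^* X x\bigr)^*\bigl(u + B^* X x\bigr)
$$
is computed correctly from the Riccati equation, and integrating it yields $\mathcal{J}(u,x_0) = x_0^* X x_0 + \int_0^\infty \|u + B^* X x\|^2\,dt$ once the boundary term is disposed of; the closed-loop substitution then gives both the explicit formula for $u_*$ and its uniqueness. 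You have also correctly isolated the one genuinely delicate point, namely showing that $x(T)^* X x(T) \to 0$ for every finite-cost competitor (this is where observability, or in fact detectability, of $(C,A)$ enters), and deferring that admissibility bookkeeping to the reference is a reasonable choice given that the paper itself treats the entire theorem as imported background. In short: where the paper buys the result wholesale by citation, you give the substantive computation; both are legitimate, and your sketch contains no error, only the explicitly acknowledged reliance on the standard LQR admissibility framework.
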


\vskip 0.1in
The optimal control function $u_*(t)$ in the theorem above is in fact determined  as
$u_*(t) = -B^*X x(t)$, giving rise to the
closed-loop dynamical system 
$$
\dot x(t) = (A-BB^*X) x(t), \qquad x(0)=x_0,
$$
whose solution is
$x(t) = \exp((A - BB^*X)t) x_0$ for $t\ge 0$ \cite{Hewer.Kenney.88}. 

A reduced order model aims at representing the given large dynamical system  by means
of a significantly smaller one. This can be done by projecting data onto a significantly
smaller space.  A popular strategy in this class is to use the Rational Krylov 
subspace to reduce the coefficient matrices by projecting them onto an
appropriate vector space \cite{Antoulas.05}. 
The solutions of the reduced system can effectively approximate the original state and control
in case the space trajectories do not occupy the whole state space. In practice,
this means that the original model can be well represented by far fewer
degrees of freedom \cite{Antoulasetal.10}.

A quantity of interest to the control community
that is used to monitor the quality of the reduced system 
is the transfer function, for which a large literature is available; see, e.g., \cite{Antoulas.05},%
\cite{Benner2005a},\cite{Gallivanetal.04},\cite{Schilders2008} and their references. 
Here we focus on the reduction process, and show
that the subspace projection allows one to
determine the optimal control of the reduced dynamical system.
  Let the $d_k\ll n$ orthonormal columns of $V_k \in \RR^{n\times d_k}$ span the
computed subspace, and, as in the previous section,
 let $T_k = V_k^* A V_k$, $B_k =V_k^* B$, $C_k^* = V_k^* C^*$.
Then we can define the reduced order system

\begin{eqnarray}\label{eqn:sys_reduced}
\left\{ \begin{array}{l}
{\dot{\widehat x}}(t) = T_k \widehat x(t) + B_k \widehat u(t), \qquad \widehat x(0)=V_k^* x_0 .\\
\widehat y(t) = C_k \widehat x(t), \end{array} \right .
\end{eqnarray}

Clearly, as $d_k\to n$ the reduced system approaches the original one.
For smaller $d_k$, the quantity $x_k(t) = V_k \widehat x(t)$ is  an approximate
state of the original system. 

\begin{corollary}
The solution matrix $Y_k$ of (\ref{eqn:Yreduced})
 is the unique non-negative solution 
that gives the feedback optimal control $\widehat u_*(t)$, $t\ge 0$,
for the system (\ref{eqn:sys_reduced}).
\end{corollary}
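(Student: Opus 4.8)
The plan is to recognize that the Corollary is essentially a direct application of Theorem~\ref{th:JvsR} to the reduced system (\ref{eqn:sys_reduced}), so the main task is to verify that the hypotheses of that theorem are met by the reduced triple $(T_k, B_k, C_k)$. First I would write down the linear-quadratic regulator problem associated with (\ref{eqn:sys_reduced}): for the reduced cost functional $\widehat{\cal J}(\widehat u, \widehat x_0) = \int_0^\infty (\widehat x(t)^* C_k^* C_k \widehat x(t) + \widehat u(t)^* \widehat u(t))\, dt$, one seeks $\inf_{\widehat u} \widehat{\cal J}(\widehat u, V_k^* x_0)$. Theorem~\ref{th:JvsR}, applied verbatim with $(A,B,C)$ replaced by $(T_k, B_k, C_k)$, says that provided $(T_k, B_k)$ is stabilizable and $(C_k, T_k)$ is observable, there is a unique $Y_k \ge 0$ solving the reduced Riccati equation (\ref{eqn:Yreduced}), and that the optimal control is the feedback law $\widehat u_*(t) = -B_k^* Y_k \widehat x(t) = -B_k^* Y_k \exp((T_k - B_k B_k^* Y_k)t)\widehat x(0)$.

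The substantive step, then, is the existence/uniqueness of the stabilizing $Y_k \ge 0$. Here I would invoke the standing assumption made just after (\ref{eqn:Yreduced}): $A$ is passive, hence $T_k = V_k^* A V_k$ is stable (indeed passive, since its field of values is contained in that of $A$). For a \emph{stable} matrix $T_k$, the pair $(T_k, B_k)$ is trivially stabilizable (take the zero feedback), and similarly $(C_k, T_k)$ being observable is not needed for existence of a non-negative stabilizing solution — stability of $T_k$ alone guarantees that (\ref{eqn:Yreduced}) has a unique stabilizing solution $Y_k \ge 0$, as already asserted in the text following (\ref{eqn:Yreduced}). So I would simply cite that sentence: under the passivity hypothesis the reduced Riccati equation has a unique stabilizing $Y_k \ge 0$, which is exactly the solution matrix appearing in the Galerkin construction of $X_k = V_k Y_k V_k^*$.

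Putting the pieces together: by the definition of $Y_k$ as the stabilizing non-negative solution of (\ref{eqn:Yreduced}), and by the optimal-control characterization in Theorem~\ref{th:JvsR} transported to the reduced system, $Y_k$ is precisely the matrix that furnishes the feedback optimal control for (\ref{eqn:sys_reduced}) via $\widehat u_*(t) = -B_k^* Y_k \widehat x(t)$, with associated closed-loop dynamics $\dot{\widehat x}(t) = (T_k - B_k B_k^* Y_k)\widehat x(t)$. Uniqueness of such a non-negative solution follows from the same theorem (or from the remark following (\ref{eqn:Yreduced})).

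I expect the only real obstacle to be bookkeeping about hypotheses: Theorem~\ref{th:JvsR} is stated with stabilizability and observability, whereas for the reduced problem the cleanest route is through stability of $T_k$, which sidesteps observability entirely. I would make this explicit — noting that stability of $T_k$ is strictly stronger than stabilizability of $(T_k, B_k)$, and that it already guarantees a unique non-negative stabilizing $Y_k$ independent of any observability assumption — so that the Corollary does not silently inherit an observability hypothesis on the reduced pair that need not hold even when $(C,A)$ is observable. Everything else is a transcription of Theorem~\ref{th:JvsR} into the reduced coordinates.
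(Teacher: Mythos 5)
Your proposal is correct and follows essentially the same route as the paper: apply Theorem~\ref{th:JvsR} to the reduced triple $(T_k,B_k,C_k)$ and observe that the resulting reduced Riccati equation coincides with the Galerkin-projected equation (\ref{eqn:Yreduced}). Your extra bookkeeping on the hypotheses (stability of $T_k$ from passivity of $A$ replacing stabilizability, and the caveat about observability of $(C_k,T_k)$) is a point the paper's proof passes over silently, but it does not change the argument.
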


\begin{proof}
Let
$$
\widehat {\cal J}_k(\widehat u, \widehat x_0) = 
\int_0^\infty (\widehat x(t)^* C_k^*C_k  \widehat x(t) + \widehat u(t)^* \widehat u(t)) dt .
$$
be the cost functional associated with (\ref{eqn:sys_reduced}).
 By applying Theorem \ref{th:JvsR}, 
an optimal control for the reduced system is 
$\widehat u_*(t) = -B_k^* Y_k \exp( (T_k - B_kB_k^* Y_k)t) \widehat x_0$,
where $Y_k$ solves the reduced Riccati equation
\begin{eqnarray}\label{eqn:main_reduced}
T_k^* Y + Y T_k - Y B_kB_k^* Y + C_k^* C_k =0 ,
\end{eqnarray}
with the reduced state $\widehat x(t) = \exp( (T_k - B_kB_k^* Y_k)t) \widehat x_0$.
Equation (\ref{eqn:main_reduced}) is precisely the Riccati equation obtained
by Galerkin projection of the original large scale matrix equation (\ref{eqn:main})
 onto the given subspace. 
\end{proof}

Theorem \ref{th:JvsR}(ii) implies
$$
\widehat {\cal J}_k(\widehat u_*, \widehat x_0) = \widehat x_0^* Y_k \widehat x_0 =
x_0^* V_k Y_k V_k^* x_0
=  x_0^* X_k  x_0.
$$
Therefore, if $X_k \to X$ as $d_k \to \infty$, the optimal value of the reduced functional 
yields an estimate to the minimum functional cost via
the approximate solution $X_k=V_k Y_k V_k^*$ to the large Riccati equation.

An approximate control function $u_k(t)$ for the
unreduced functional ${\cal J}(u_k, x_0)$ is obtained directly
using the approximation $X_k$, bypassing the reduced functional $\widehat {\cal J}$.
Indeed, if we assume that the approximate Riccati solution $X_k$ is stabilizing, we can write
\begin{eqnarray}\label{eqn:ux}
u_k(t) = -B^* X_k x_k(t), \qquad {\rm with}\quad x_k(t)= \exp( (A - B B^* X_k)t)  x_0   .
\end{eqnarray}
Substituting $X_k = V_k Y_k V_k^*$ we get
$u_k(t)= -(B^* V_k) Y_k V_k^* \exp( (A - B (B^* V_k) Y_kV_k^*)t)  x_0$.
The question then arises as of whether $u_k$ and $\widehat u_*$ are
related.
Comparing this expression with that of 
$\widehat u_*(t)$, we see that they are close to each other as soon as
$$
\exp( (V_k^*(A  -  B B X_k)V_k t) V_k^* \approx
 V_k^* \exp( (A - B B^* X_k)t) .
$$
Using the expansion of $\exp(z)$ in terms of power series and taking transpose conjugations, 
this approximation can be written as
$$
(A^* - X_k BB^*)^{\ell} V_k \approx V_k \left( V_k^*(A^* - X_k BB^*)V_k\right)^{\ell},
\qquad {\rm for \, any\, }\ell \in \NN.
$$
This approximation becomes an equality as soon as range($V_k$) is an invariant
subspace of $A^* - X_k BB^*$. In general, however, the columns of $V_k$ do not
span an invariant subspace, therefore this connection is not sufficient to
connect the two control functions.
The following proposition does provide a relation between the optimal reduced cost functional value
with the value of the original functional at $u_k$. 

\begin{proposition}
Assume that $A-BB^*X_k$ is stable and that $u_k$ is defined as in (\ref{eqn:ux}).
With the previous notation it holds
$$
|{\cal J}(u_k,x_0) - \widehat {\cal J}_k(\widehat u_*, \widehat x_0) | \le
\frac{\|R_k\|}{2\alpha} x_0^*x_0,
$$
where $\alpha>0$ is such that
$\|e^{(A-BB^*X_k)^*t} \| \le e^{-\alpha t}$ for all $t \ge 0$.
\end{proposition}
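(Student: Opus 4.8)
The plan is to compute ${\cal J}(u_k,x_0)$ explicitly and compare it to $x_0^* X_k x_0 = \widehat{\cal J}_k(\widehat u_*,\widehat x_0)$, the difference being exactly the defect by which $X_k$ fails to solve the Riccati equation, i.e. a quantity controlled by $R_k$. First I would substitute $u_k(t) = -B^*X_k x_k(t)$ with $x_k(t) = e^{(A-BB^*X_k)t}x_0$ into the functional, so that the integrand becomes $x_k(t)^*(C^*C + X_k BB^* X_k)x_k(t)$. The key algebraic step is to recognize, using the rewritten residual (\ref{eqn:resshift}), that $C^*C + X_k BB^* X_k = R_k - (A-BB^*X_k)^* X_k - X_k(A-BB^*X_k)$. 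Denoting $A_k := A-BB^*X_k$ for brevity, the integrand is then $x_k(t)^* R_k x_k(t) - \frac{d}{dt}\big(x_k(t)^* X_k x_k(t)\big)$, since $\dot x_k(t) = A_k x_k(t)$.

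Next I would integrate from $0$ to $\infty$. The exact-derivative term telescopes: $\int_0^\infty -\frac{d}{dt}(x_k^* X_k x_k)\,dt = x_0^* X_k x_0 - \lim_{t\to\infty} x_k(t)^* X_k x_k(t)$, and the limit vanishes because $A_k$ is stable (so $x_k(t)\to 0$). This yields the clean identity
$$
{\cal J}(u_k,x_0) = x_0^* X_k x_0 + \int_0^\infty x_k(t)^* R_k x_k(t)\,dt .
$$
Combined with $x_0^* X_k x_0 = \widehat{\cal J}_k(\widehat u_*,\widehat x_0)$ from the discussion preceding the proposition, the difference ${\cal J}(u_k,x_0) - \widehat{\cal J}_k(\widehat u_*,\widehat x_0)$ equals precisely the integral of $x_k^* R_k x_k$.

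Finally I would bound this integral. Using $|x_k(t)^* R_k x_k(t)| \le \|R_k\|\, \|x_k(t)\|^2$ and $\|x_k(t)\| = \|e^{A_k t}x_0\| \le e^{-\alpha t}\|x_0\|$ (which is the transpose-conjugate form of the stated hypothesis $\|e^{A_k^* t}\|\le e^{-\alpha t}$, the two norms being equal), we get
$$
\Big| \int_0^\infty x_k(t)^* R_k x_k(t)\,dt \Big| \le \|R_k\|\, x_0^*x_0 \int_0^\infty e^{-2\alpha t}\,dt = \frac{\|R_k\|}{2\alpha}\, x_0^* x_0,
$$
which is the claimed estimate. I expect the main subtlety to be the bookkeeping in the first paragraph: justifying the rewriting of the integrand as a residual term minus a total derivative, and being careful that the quadratic cost uses $u^*u$ so that substituting $u_k = -B^*X_k x_k$ produces exactly the $X_k BB^* X_k$ term needed to match (\ref{eqn:resshift}). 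The stability-driven vanishing of the boundary term at infinity and the exponential integral are routine once the identity is in place.
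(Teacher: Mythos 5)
Your proof is correct and follows essentially the same route as the paper: both substitute $u_k=-B^*X_kx_k$, use the rewritten residual \nref{eqn:resshift} to express $C^*C+X_kBB^*X_k$ in terms of $R_k$ and the closed-loop Lyapunov terms, arrive at the identity ${\cal J}(u_k,x_0)=x_0^*X_kx_0+\int_0^\infty x_k^*R_kx_k\,dt$, and bound the integral by $\|R_k\|\,x_0^*x_0/(2\alpha)$. Your write-up merely makes explicit the total-derivative/telescoping step that the paper leaves implicit.
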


\begin{proof}
Using (\ref{eqn:resshift}), let us write the Riccati residual equation as
$$(A-BBX_k^*)^*X_k + X_k(A-BBX_k) + X_kBB^*X_k + C^*C - R_k = 0.$$
Then
\begin{eqnarray*}
{\cal J}(u_k, x_0) &=& \int_0^\infty ( u_k^* u_k + x_k^*C^*Cx_k) dt \\
&=& 
\int_0^\infty x_0^* e^{(A-BB^*X_k)^*t} (X_k BB^*X_k+C^*C) e^{(A-BB^*X_k)t} x_0 dt
\\
&=& x_0^* X_k x_0 + 
\int_0^\infty x_0^* e^{(A-BB^*X_k)^*t} R_k e^{(A-BB^*X_k)t} x_0 dt.
\end{eqnarray*}
From 
$x_0^* X_k x_0 = {\widehat J}_k(\widehat u_*, \widehat x_0)$ and
$|\int_0^\infty x_0^* e^{(A-BB^*X_k)^*t} R_k e^{(A-BB^*X_k)t} x_0 dt| \le
\frac{\|R_k\|}{2\alpha} x_0^*x_0$ the result follows.
\end{proof}

This theorem establishes a linear relation between the matrix equation
residual norm and the distance between the optimal value of the reduced functional
and the value of the approximate unreduced functional. As the residual
norm goes to zero, the two functional values coalesce, and this may occur
for $d_k \ll n$, that is with a projection space of much smaller dimension
than the original one.

We conclude with a remark about the type of approximation space used.
In model order reduction, usually different projection spaces are used from the
 left and from the right, so as to expand both in terms of $C^*$ and $B$.
The connection between this approach and the reduction of the
(symmetric) Riccati equation deserves future analysis.

\section{Control stability properties of the  subspace projection approximation}\label{sec:control}

By using the residual equation,
norm estimates for the error $X-X_k$ can be derived by using classical
perturbations results. In this section we recall these classical estimates,
which can have a different flavor in our setting, where the
{\it perturbations} are not very small in general. Nonetheless, these
results enable us to state that for $d_k$ large enough the approximate solution $X_k$ is
rigorously equipped with all the nice stabilizability properties of
the exact solution.
{Moreover, they can be used to track the progress in the
approximation as the approximation space grows.}

Unlike the linear equation case, a small residual norm
does not necessarily imply a small error, since the Riccati equation
has more than one solution. Therefore, in general an assumption is needed
about the closeness of the approximate solution to the sought after one,
to be able to derive information on the error norm from the residual norm.

Let $X$ be an exact stabilizing solution,
 $E_k = X-X_k$ the error and $R_k = A^* X_k +X_k A -  X_k BB^* X_k + C^* C$ the residual.
Subtract this residual equation from (\ref{eqn:main}). Then
by  adding and subtracting
$X BB^* X_k$ and $E_k BB^* X_k$ in sequence, we obtain
$$
(A^*-X B B^*) E_k + E_k(A - BB^* X) + E_k BB^*E_k + R_k = 0 . 
$$
We observe in passing that the second order term in $E_k$ becomes 
negligeable for $\|E_k\|\ll 1$.
From this Riccati equation for the error,
under certain conditions a bound on the error can be obtained.
To this end we recall the definition of the {\it closed-loop Lyapunov operator}
$$
\Omega_{X}(Z) : = (A-BB^*X)^* Z + Z (A - BB^*X),
$$
and observe that if $H$ is the matrix solving
$(A-BB^*X)^*H+H(A-BB^*X)=-I$, then
$\|H\| = \|\Omega_X^{-1}\| = \max_{Z\ne 0} ( \|\Omega_X^{-1}(Z)\|/\|Z\|)$;
see \cite[Lemma 2]{Kenney.Laub.Wette.90}. Note that $\|\Omega_X^{-1}\|$ is the
reciprocal of the sep operator for the given matrix \cite{Stewart.Sun.90}.
An interesting interpretation of $\|\Omega_X^{-1}\|$
in terms of the {\it damping} of the closed-loop dynamical 
system is also given in \cite{Kenney.Laub.Wette.90}.

\begin{theorem}{\rm \cite{Kenney.Laub.Wette.90}}\label{th:error}
Let $X$ be a symmetric and positive semidefinite solution to (\ref{eqn:main})
such that $A-BB^*X$ is stable. Assume that $\|X-X_k\| < 1/(3\|B\|^2 \|\Omega_X^{-1}\|)$.
If the residual matrix $R_k$ satisfies $4\|B\|^2 \|\Omega_X^{-1}\|^2 \|R_k\| < 1$ then
$$
\|X-X_k \| \le 2 \|\Omega_{X}^{-1}\| \, \|R_k\| .
$$
\end{theorem}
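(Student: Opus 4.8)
The plan is to treat the error Riccati equation
$$
(A^*-XBB^*)E_k + E_k(A-BB^*X) + E_kBB^*E_k + R_k = 0
$$
derived just above the statement as a fixed-point equation for $E_k$, and to apply a contraction-mapping (or Banach fixed-point / Newton-Kantorovich style) argument in the closed-loop Lyapunov operator $\Omega_X$. Rewriting the equation as $\Omega_X(E_k) = -(R_k + E_kBB^*E_k)$, and using the invertibility of $\Omega_X$ (guaranteed because $A-BB^*X$ is stable), we get the fixed-point form
$$
E_k = -\Omega_X^{-1}\bigl(R_k + E_kBB^*E_k\bigr) =: \Phi(E_k).
$$
The map $\Phi$ is defined on the set of symmetric matrices, and the goal is to show it maps a suitable ball $\{\,\|Z\|\le\rho\,\}$ into itself and is a contraction there, so that its unique fixed point — which must be $E_k = X-X_k$ once we know $X_k$ is close enough to $X$ — satisfies the desired bound.

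First I would estimate $\|\Phi(Z)\|$ for $\|Z\|\le\rho$: since $\|E_kBB^*E_k\|\le \|B\|^2\|Z\|^2 \le \|B\|^2\rho^2$, we get $\|\Phi(Z)\| \le \|\Omega_X^{-1}\|\,(\|R_k\| + \|B\|^2\rho^2)$. Choosing $\rho = 2\|\Omega_X^{-1}\|\,\|R_k\|$ — exactly the target bound — one checks that the self-map condition $\|\Omega_X^{-1}\|(\|R_k\| + \|B\|^2\rho^2) \le \rho$ reduces, after dividing through, to $4\|B\|^2\|\Omega_X^{-1}\|^2\|R_k\| \le 1$, which is precisely the hypothesis (the strict inequality gives a little room). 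Next I would check the contraction property: for $\|Z_1\|,\|Z_2\|\le\rho$,
$$
\Phi(Z_1)-\Phi(Z_2) = -\Omega_X^{-1}\bigl(Z_1BB^*Z_1 - Z_2BB^*Z_2\bigr),
$$
and writing $Z_1BB^*Z_1 - Z_2BB^*Z_2 = (Z_1-Z_2)BB^*Z_1 + Z_2BB^*(Z_1-Z_2)$ gives
$$
\|\Phi(Z_1)-\Phi(Z_2)\| \le 2\|\Omega_X^{-1}\|\,\|B\|^2\,\rho\,\|Z_1-Z_2\| = 4\|\Omega_X^{-1}\|^2\|B\|^2\|R_k\|\,\|Z_1-Z_2\|,
$$
so the Lipschitz constant is $4\|B\|^2\|\Omega_X^{-1}\|^2\|R_k\| < 1$ by hypothesis: $\Phi$ is a contraction on this ball, hence has a unique fixed point $E_*$ with $\|E_*\|\le\rho = 2\|\Omega_X^{-1}\|\,\|R_k\|$.

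The remaining, and I expect most delicate, step is the uniqueness/identification argument: I must argue that the true error $X-X_k$ actually lies in the ball where the contraction lives, so that it coincides with the fixed point $E_*$ and therefore inherits the bound. This is exactly where the extra hypothesis $\|X-X_k\| < 1/(3\|B\|^2\|\Omega_X^{-1}\|)$ enters — it is \emph{not} needed for the contraction estimate itself but to localize the exact solution. One checks $1/(3\|B\|^2\|\Omega_X^{-1}\|) \le 2\|\Omega_X^{-1}\|\|R_k\|$ is the wrong direction, so instead the argument runs the other way: $X-X_k$ is \emph{a priori} known to satisfy the error Riccati equation, i.e. it is \emph{a} fixed point of $\Phi$; enlarging the ball to radius $\rho' = 1/(3\|B\|^2\|\Omega_X^{-1}\|)$ one verifies (using $4\|B\|^2\|\Omega_X^{-1}\|^2\|R_k\|<1$, hence $\|R_k\| < 1/(4\|B\|^2\|\Omega_X^{-1}\|^2)$, together with $\rho' = 1/(3\|B\|^2\|\Omega_X^{-1}\|)$) that $\Phi$ still maps the ball of radius $\rho'$ into itself and the Lipschitz constant $2\|\Omega_X^{-1}\|\|B\|^2\rho' = \tfrac{2}{3} < 1$ is still a contraction there. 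Since both $X-X_k$ and $E_*$ lie in the larger ball and $\Phi$ has a unique fixed point there, $X-X_k = E_*$, and the sharper bound $\|X-X_k\| = \|E_*\| \le 2\|\Omega_X^{-1}\|\|R_k\|$ follows. I would, if needed, consult \cite{Kenney.Laub.Wette.90} for the precise constant bookkeeping, since the two smallness assumptions are calibrated to make both balls work simultaneously; getting those constants to line up cleanly is the only real subtlety.
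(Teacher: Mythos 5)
The paper does not actually prove this theorem: it is quoted verbatim from \cite{Kenney.Laub.Wette.90}, so there is no internal proof to compare against. Your contraction-mapping strategy is the right one and is essentially the argument behind the cited result: the fixed-point form $E=\Phi(E)=-\Omega_X^{-1}(R_k+EBB^*E)$, the self-map computation on the ball of radius $\rho=2\|\Omega_X^{-1}\|\,\|R_k\|$ (which reduces exactly to $4\|B\|^2\|\Omega_X^{-1}\|^2\|R_k\|\le 1$), and the Lipschitz estimate are all correct.

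The one step that does not work as written is the identification argument via the ball of radius $\rho'=1/(3\|B\|^2\|\Omega_X^{-1}\|)$. First, $\Phi$ does \emph{not} map that ball into itself under the stated hypotheses: using $\|R_k\|<1/(4\|B\|^2\|\Omega_X^{-1}\|^2)$ one gets $\|\Phi(Z)\|\le\|\Omega_X^{-1}\|(\|R_k\|+\|B\|^2\rho'^2)<\bigl(\tfrac14+\tfrac19\bigr)/(\|B\|^2\|\Omega_X^{-1}\|)=\tfrac{13}{36}\,/(\|B\|^2\|\Omega_X^{-1}\|)$, and $13/36>1/3$, so the self-map condition fails by a hair. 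Second, the fixed point $E_*$ produced by the contraction need not lie in the $\rho'$-ball, since all you know is $\rho<1/(2\|B\|^2\|\Omega_X^{-1}\|)$, which exceeds $\rho'$. The repair is immediate, because uniqueness needs only the Lipschitz bound applied to the two specific fixed points, not a self-map property: since $E_k:=X-X_k$ and $E_*$ both satisfy $E=\Phi(E)$,
\begin{equation*}
\|E_k-E_*\|\;\le\;\|\Omega_X^{-1}\|\,\|B\|^2\bigl(\|E_k\|+\|E_*\|\bigr)\,\|E_k-E_*\|\;<\;\Bigl(\tfrac13+\tfrac12\Bigr)\|E_k-E_*\|,
\end{equation*}
where $\|\Omega_X^{-1}\|\,\|B\|^2\|E_k\|<1/3$ comes from the closeness hypothesis and $\|\Omega_X^{-1}\|\,\|B\|^2\|E_*\|\le 2\|\Omega_X^{-1}\|^2\|B\|^2\|R_k\|<1/2$ from the residual hypothesis. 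Hence $E_k=E_*$ and the bound follows. With this substitution your proof is complete.
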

We refer the reader to \cite{Gahinet.Laub.90}  for more refined estimates. 
This bound is a generalization to the nonlinear case
of the well known bound for the (vector) norm
of the error when approximately solving a linear system $Ax= b$.
We note that the ``norm of the inverse'' is replaced here with
the norm of the closed-loop operator inverse, which takes into account
both the linear and the quadratic coefficient matrices.

We next recall a theorem on the sensitivity of the
Lyapunov equation solution. 

\begin{theorem}\label{th:HK}{\rm \cite[Theorem 2.2]{Hewer.Kenney.88}}
Let $A$ be stable and let $H$ satisfy $A^*H+HA=-I$. Let $\Delta A$ satisfy
$\|\Delta A\| < 1/(2\|H\|)$. Then $A+\Delta A$ is stable.
\end{theorem}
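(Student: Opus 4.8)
The plan is to use the Lyapunov solution $H$ as a certificate of stability for the perturbed matrix. Recall that for a stable $A$, the (unique) solution $H$ of $A^*H + HA = -I$ is symmetric positive definite; this is the standard Lyapunov characterization of stability. The idea is to test the Lyapunov operator associated with $A+\Delta A$ on the \emph{same} matrix $H$ and show the result is still negative definite; by the converse Lyapunov theorem this forces $A+\Delta A$ to be stable.

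First I would write out
$$
(A+\Delta A)^* H + H (A + \Delta A) = (A^*H + HA) + (\Delta A^* H + H \Delta A) = -I + (\Delta A^* H + H\Delta A).
$$
Next I would bound the perturbation term in norm: $\|\Delta A^* H + H\Delta A\| \le 2\|H\|\,\|\Delta A\|$. Under the hypothesis $\|\Delta A\| < 1/(2\|H\|)$ this gives $\|\Delta A^* H + H\Delta A\| < 1$, hence the symmetric matrix $-I + (\Delta A^* H + H\Delta A)$ is negative definite (its largest eigenvalue is at most $-1 + 2\|H\|\|\Delta A\| < 0$). Therefore there exists $W := I - (\Delta A^*H + H\Delta A) > 0$ with $(A+\Delta A)^* H + H(A+\Delta A) = -W$. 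Since $H > 0$ and $W > 0$, the standard Lyapunov stability criterion (if $M^*P + PM = -Q$ with $P>0$, $Q>0$, then $M$ is stable) yields that $A+\Delta A$ is stable.

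The only real subtlety — and hence the step I would be most careful with — is invoking the \emph{converse} Lyapunov theorem: the existence of a single positive definite $H$ with $(A+\Delta A)^*H + H(A+\Delta A) \prec 0$ implies all eigenvalues of $A+\Delta A$ lie in the open left half plane. This is classical (take an eigenpair $(A+\Delta A)v = \lambda v$, form $v^*[(A+\Delta A)^*H + H(A+\Delta A)]v = 2\,\mathrm{Re}(\lambda)\, v^*Hv < 0$, and use $v^*Hv > 0$ to conclude $\mathrm{Re}(\lambda) < 0$), so it is essentially a one‑line argument rather than a genuine obstacle; everything else is routine norm estimation.
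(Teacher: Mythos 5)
Your proof is correct and complete. Note, however, that the paper does not prove this statement at all: it is quoted verbatim from \cite[Theorem 2.2]{Hewer.Kenney.88}, so there is no internal argument to compare against. Your route is the standard one: $H>0$ follows from stability of $A$ (e.g.\ via $H=\int_0^\infty e^{A^*t}e^{At}\,dt$), the norm estimate $\|\Delta A^*H+H\Delta A\|\le 2\|H\|\,\|\Delta A\|<1$ keeps the right-hand side $-I+(\Delta A^*H+H\Delta A)$ negative definite, and the converse Lyapunov test (the eigenpair computation $2\,\mathrm{Re}(\lambda)\,v^*Hv<0$) closes the argument. The only fact that genuinely needs to be on the record is the positive definiteness of $H$, and you state and justify it, so nothing is missing.
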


This result enables us to state that
if the error $X-X_k$ is small enough, then $X_k$ is stabilizable; a similar result can
also be found in \cite[Lemma 1]{Kenney.Laub.Wette.90}.

\begin{corollary}
Let $A-BB^* X$ be stable and
let $X_k$ be an approximate solution to
(\ref{eqn:main}) and $E_k = X - X_k$. 
If $\|BB^*E_k\|< 1/(2\|\Omega_X^{-1}\|)$, 
then $A-BB^*X_k$ is stable.
\end{corollary}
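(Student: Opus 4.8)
The plan is to apply Theorem~\ref{th:HK} with the stable matrix taken to be the closed-loop matrix $A-BB^*X$ and the perturbation taken to be $BB^*E_k$, so that the perturbed matrix is
$$
(A-BB^*X) + BB^*E_k = A - BB^*X + BB^*(X-X_k) = A - BB^*X_k,
$$
which is exactly the matrix whose stability we must establish. First I would set $M := A - BB^*X$, which is stable by hypothesis, and let $H$ be the solution of the Lyapunov equation $M^*H + HM = -I$; by the remark preceding Theorem~\ref{th:HK} (and the discussion of $\Omega_X^{-1}$ in section~\ref{sec:control}), we have $\|H\| = \|\Omega_X^{-1}\|$. Then the hypothesis $\|BB^*E_k\| < 1/(2\|\Omega_X^{-1}\|) = 1/(2\|H\|)$ is precisely the smallness condition $\|\Delta A\| < 1/(2\|H\|)$ required by Theorem~\ref{th:HK} with $\Delta A := BB^*E_k$.

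With these identifications in place, Theorem~\ref{th:HK} immediately yields that $M + BB^*E_k = A - BB^*X_k$ is stable, which is the claim. The only genuine content of the proof is therefore the bookkeeping identification $\|\Omega_X^{-1}\| = \|H\|$ and the algebraic rewriting $A - BB^*X_k = (A-BB^*X) + BB^*(X - X_k)$; everything else is a direct invocation of the previously stated theorem.

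The main obstacle, such as it is, is simply making sure the norm identity $\|\Omega_X^{-1}\| = \|H\|$ is invoked correctly — that is, that the operator $\Omega_X$ appearing in the Lemma of Kenney, Laub and Wette is the same closed-loop Lyapunov operator $Z \mapsto M^*Z + ZM$ with $M = A-BB^*X$ that I am using here, which it is by the definition given in section~\ref{sec:control}. No estimate needs to be sharpened and no new analytic input is required; the corollary is a straightforward corollary of Theorem~\ref{th:HK} once the perturbation is decomposed in the right way.

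\begin{proof}
Write $M := A - BB^*X$, which is stable by assumption, and let $H$ solve
$M^*H + HM = -I$. As recalled before Theorem~\ref{th:HK}, $\|H\| = \|\Omega_X^{-1}\|$.
Setting $\Delta A := BB^* E_k$, the hypothesis $\|BB^* E_k\| < 1/(2\|\Omega_X^{-1}\|)$
reads $\|\Delta A\| < 1/(2\|H\|)$, so Theorem~\ref{th:HK} ensures that $M + \Delta A$
is stable. Finally,
$$
M + \Delta A = (A - BB^*X) + BB^*(X - X_k) = A - BB^*X_k,
$$
which is therefore stable.
\end{proof}
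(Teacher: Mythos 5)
Your proof is correct and follows exactly the same route as the paper: the decomposition $A-BB^*X_k=(A-BB^*X)+BB^*E_k$ followed by a direct application of Theorem~\ref{th:HK}, with the identification $\|H\|=\|\Omega_X^{-1}\|$ (which the paper leaves implicit but you rightly make explicit). Nothing to add.
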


\begin{proof}
We write $A-BB^*X_k = (A-BB^*X) + BB^*E_k =: \widetilde A + \Delta {\widetilde A}$.
We thus apply Theorem \ref{th:HK} to $\widetilde A$, $\Delta {\widetilde A}$:
$\widetilde A$ is stable by hypothesis; moreover,
if $\|BB^*E_k\| = \|\Delta {\widetilde A}\| < 1/(2\|\Omega_X^{-1}\|)$ then
$\widetilde A + \Delta {\widetilde A}$ is stable.
\end{proof}

Finally, we turn our attention to the special form of the approximate solution,
that is $X_k = V_k Y_k V_k^*$.
The following result shows that after $k$ iterations of a projection method,
the reduced solution matrix $Y_k$ is stabilizing.

\begin{proposition}
Let $T_k$ be stable and $(T_k-B_kB_k^*Y_k, C_k^*)$ controllable. 
Let $Y_k$ be the approximation obtained after $k$ iterations of
the chosen projection method.
Then $T_k - B_kB_k^* Y_k$ is a stable matrix.
\end{proposition}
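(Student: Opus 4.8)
The plan is to recognize that $Y_k$ solves the reduced Riccati equation \nref{eqn:Yreduced}, namely
$$
T_k^* Y_k + Y_k T_k - Y_k B_kB_k^* Y_k + C_k^* C_k = 0,
$$
and that this is exactly an algebraic Riccati equation of the same form as \nref{eqn:main} but with the reduced data $(T_k, B_k, C_k)$. Since $T_k$ is stable by hypothesis, the standard theory (e.g.\ Theorem \ref{th:JvsR}, or more precisely the classical existence/uniqueness theory invoked right after \nref{eqn:Yreduced}) guarantees that \nref{eqn:Yreduced} has a unique symmetric positive semidefinite solution for which the closed-loop matrix $T_k - B_kB_k^* Y_k$ is stable---provided the usual stabilizability and detectability hypotheses hold for the reduced triple. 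The role of the hypothesis ``$T_k$ stable'' is to supply stabilizability of $(T_k, B_k)$ trivially (take the zero feedback), and the hypothesis that $(T_k - B_kB_k^* Y_k, C_k^*)$ is controllable is what replaces a detectability assumption.

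The main step I would carry out is to rewrite \nref{eqn:Yreduced} as a Lyapunov equation for $Y_k$ along the closed-loop matrix. Setting $F_k := T_k - B_kB_k^* Y_k$, adding and subtracting $Y_k B_kB_k^* Y_k$ gives
$$
F_k^* Y_k + Y_k F_k = -\,(C_k^* C_k + Y_k B_kB_k^* Y_k) =: -G_k,
$$
with $G_k \ge 0$. This is a Lyapunov equation $F_k^* Y_k + Y_k F_k = -G_k$ with $Y_k \ge 0$ and $G_k \ge 0$. To conclude stability of $F_k$ from such an equation one invokes the standard inertia/Lyapunov argument: if $Y_k \ge 0$ solves $F_k^* Y_k + Y_k F_k = -G_k$ with $G_k \ge 0$, and if the pair $(F_k, G_k^{1/2})$---equivalently $(F_k, C_k^* )$ together with the contribution from $Y_k B_k$---is such that no eigenvector of $F_k$ associated with a non-negative-real-part eigenvalue lies in the null space of $G_k$, then $F_k$ is stable. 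The controllability hypothesis on $(F_k, C_k^*)$ ensures precisely this: an eigenvector $v$ of $F_k^*$ with $F_k^* v = \lambda v$ and $v^* G_k v = 0$ would force $C_k v = 0$ and $B_k^* Y_k v = 0$, hence $F_k v$ and $T_k v$ would agree on $v$, and feeding $v$ back through the controllability matrix of $(F_k, C_k^*)$ would contradict full rank unless $v = 0$; combined with $\mathrm{Re}(\lambda)\,v^* Y_k v = -\tfrac12 v^* G_k v = 0$ and positivity arguments one pins down $\mathrm{Re}(\lambda) < 0$.

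The hard part will be making the inertia argument fully rigorous rather than hand-wavy: one must carefully handle the possibility that $Y_k$ is only semidefinite (so $v^* Y_k v$ may vanish), and chase the consequences through $B_k^* Y_k v = 0$ and $C_k v = 0$ to reach a contradiction with controllability of $(T_k - B_kB_k^* Y_k, C_k^*)$. The cleanest route is probably to cite the appropriate classical result directly---for instance the Lyapunov-type theorem stating that if $A^* P + P A = -Q$ with $P \ge 0$, $Q \ge 0$, and $(A, Q)$ observable (equivalently $(A^*, Q)$ with the right rank condition), then $A$ is stable (see, e.g., the references already cited such as \cite{Lancaster.Rodman.95} or \cite{Hewer.Kenney.88})---applied with $A = F_k$, $P = Y_k$, $Q = G_k = C_k^* C_k + Y_k B_kB_k^* Y_k$. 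Then it only remains to observe that controllability of $(F_k, C_k^*)$ implies the needed observability/rank condition on $(F_k, G_k)$, since $C_k^* C_k \le G_k$ forces $\ker G_k \subseteq \ker C_k$ and hence any $F_k$-invariant subspace contained in $\ker G_k$ is contained in $\ker C_k$, contradicting controllability unless it is trivial. I would therefore structure the proof as: (1) derive the closed-loop Lyapunov equation; (2) note $Y_k \ge 0$ and $G_k \ge 0$; (3) reduce the detectability-type hypothesis to the stated controllability of $(T_k - B_kB_k^*Y_k, C_k^*)$; (4) invoke the classical Lyapunov stability theorem.
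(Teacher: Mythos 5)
Your proof is correct and follows essentially the same route as the paper: both rewrite the reduced Riccati equation as the closed-loop Lyapunov equation $(T_k-B_kB_k^*Y_k)^*Y_k + Y_k(T_k-B_kB_k^*Y_k) = -(Y_kB_kB_k^*Y_k + C_k^*C_k)$, observe that the right-hand side dominates $C_k^*C_k$ so the controllability hypothesis applies, and then invoke the classical Lyapunov/inertia theorem (the paper cites Theorem 5.3.2(b) of \cite{Lancaster.Rodman.95}). Your extra discussion of the semidefinite case is exactly what that cited theorem handles, so no gap remains.
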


\begin{proof}
The symmetric matrix $Y_k$ solves the reduced matrix equation
$T_k^* Y +Y T_k - Y B_kB_k^* Y + C_k^* C_k=0$. Rewriting the equation,
$Y_k$ satisfies
$$
(T_k^*-Y_k B_k B_k^*) Y_k + Y_k( T_k - B_kB_k^* Y_k) + Y_k B_k^*B_kY_k+ C_k^* C_k=0,
$$
that is, $Y_k$ formally solves a Lyapunov equation. Since 
$Y_k B_k^*B_kY_k+ C_k^* C_k \ge C_k^* C_k$, Theorem 5.3.2(b) in \cite{Lancaster.Rodman.95}
ensures that the eigenvalues of $T_k-B_kB_k^*Y_k$ all lie in the
open left half-plane, that is the matrix is stable.
\end{proof}

Next result tracks the modification in the approximate solution matrix
$X_k$ as the subspace grows. It is important to realize that in general,
the matrices $Y_{k}$ in the sequence are computed by solving a
new and expanding Riccati equation, therefore the entries of
$Y_k$ and $Y_{k+1}$ are not related by a simple explicit recurrence. 

\begin{proposition}
Let $X_j$ be the approximate solution onto ${\cal K}_j$ for $j=k,k+1$. Then
for $k$ large enough,
$$
\|X_{k+1} - X_k \| \le 2 \|\Omega_{Y_{k+1}}^{-1}\| \|R_k\| .
$$
\end{proposition}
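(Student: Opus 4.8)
The plan is to regard $X_k$ not as an approximate solution of the original equation~\nref{eqn:main}, but as an approximate solution of the \emph{reduced} Riccati equation associated with the larger space ${\cal K}_{k+1}$, and then to apply the perturbation bound of Theorem~\ref{th:error} to that reduced equation. Choose the orthonormal bases compatibly, so that $V_k$ consists of the first $d_k$ columns of $V_{k+1}$, and let $\widehat Y_k$ be the $d_{k+1}\times d_{k+1}$ matrix obtained from $Y_k$ by appending zero rows and columns; then $X_k = V_k Y_k V_k^* = V_{k+1}\widehat Y_k V_{k+1}^*$, so $X_{k+1}-X_k = V_{k+1}(Y_{k+1}-\widehat Y_k)V_{k+1}^*$ and, since $V_{k+1}$ has orthonormal columns, $\|X_{k+1}-X_k\| = \|Y_{k+1}-\widehat Y_k\|$. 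Thus it suffices to bound the error of $\widehat Y_k$ as an approximation to $Y_{k+1}$, the solution of the reduced equation $T_{k+1}^* Y + Y T_{k+1} - Y B_{k+1}B_{k+1}^* Y + C_{k+1}^* C_{k+1}=0$ on ${\cal K}_{k+1}$.

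The first step is to identify the residual of $\widehat Y_k$ in the reduced equation, namely $\widehat R_k := T_{k+1}^* \widehat Y_k + \widehat Y_k T_{k+1} - \widehat Y_k B_{k+1}B_{k+1}^* \widehat Y_k + C_{k+1}^* C_{k+1}$. Substituting $T_{k+1}=V_{k+1}^* A V_{k+1}$, $B_{k+1}=V_{k+1}^* B$, $C_{k+1}^*=V_{k+1}^* C^*$ and $X_k = V_{k+1}\widehat Y_k V_{k+1}^*$, and using $V_{k+1}^* V_{k+1}=I$, a direct computation yields the embedding identity $\widehat R_k = V_{k+1}^* R_k V_{k+1}$, whence $\|\widehat R_k\| \le \|R_k\|$. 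By the previous proposition, $Y_{k+1}$ is the symmetric positive semidefinite solution of the reduced equation for which $T_{k+1}-B_{k+1}B_{k+1}^* Y_{k+1}$ is stable, so the closed-loop operator $\Omega_{Y_{k+1}}$ is invertible and Theorem~\ref{th:error} applies to the pair $(Y_{k+1},\widehat Y_k)$: if $\|Y_{k+1}-\widehat Y_k\| < 1/(3\|B_{k+1}\|^2\|\Omega_{Y_{k+1}}^{-1}\|)$ and $4\|B_{k+1}\|^2\|\Omega_{Y_{k+1}}^{-1}\|^2\|\widehat R_k\| < 1$, then $\|Y_{k+1}-\widehat Y_k\| \le 2\|\Omega_{Y_{k+1}}^{-1}\|\,\|\widehat R_k\| \le 2\|\Omega_{Y_{k+1}}^{-1}\|\,\|R_k\|$, which together with $\|X_{k+1}-X_k\| = \|Y_{k+1}-\widehat Y_k\|$ is exactly the claimed bound.

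What remains is to check that the two smallness hypotheses hold for $k$ large enough. Here I would use that $\|B_{k+1}\| = \|V_{k+1}^* B\| \le \|B\|$ is uniformly bounded; that $\|\widehat R_k\| \le \|R_k\| \to 0$; that $\|Y_{k+1}-\widehat Y_k\| = \|X_{k+1}-X_k\| \to 0$ as the iterates converge, $X_k \to X$; and that $\|\Omega_{Y_{k+1}}^{-1}\|$ stays bounded as $k$ grows (one expects it to tend to $\|\Omega_X^{-1}\|$, since $Y_{k+1}$ converges to the compression of $X$ and the corresponding closed-loop matrices converge as well). I expect the genuine difficulty to lie precisely in this last paragraph — in making the uniform boundedness of $\|\Omega_{Y_{k+1}}^{-1}\|$ and the underlying convergence $X_k\to X$ of the projection method rigorous — since everything before it is just the embedding identity $\widehat R_k = V_{k+1}^* R_k V_{k+1}$ followed by a direct invocation of Theorem~\ref{th:error}.
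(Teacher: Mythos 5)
Your proposal is correct and follows essentially the same route as the paper: embed $Y_k$ into the $(k+1)$-st reduced equation by zero-padding, observe that its residual there is $V_{k+1}^* R_k V_{k+1}$ (hence bounded by $\|R_k\|$), and invoke Theorem~\ref{th:error} for the reduced problem, with the smallness hypotheses absorbed into the ``for $k$ large enough'' clause. Your closing remarks on the uniform boundedness of $\|\Omega_{Y_{k+1}}^{-1}\|$ identify precisely the part the paper also leaves implicit.
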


\begin{proof}
We write $X_{k+1} = V_{k+1} Y_{k+1} V_{k+1}^*$ and
$X_{k} = V_{k} Y_{k} V_{k}^*
 = V_{k+1} \check Y_{k+1} V_{k+1}^*$, where $\check Y_{k+1}$ is $Y_k$ padded with extra
rows and columns to match the dimension of $Y_{k+1}$, and we recall that $V_{k+1} = [V_k, \star]$.
Moreover, we set $T_{k+1} = [ T_k, t_{k+1}^{(1)}; (t_{k+1}^{(1)})^* , \star]$.
$Y_{k+1}$ solves the reduced equation
$T_{k+1}^* Y + Y T_{k+1} - Y B_{k+1}B_{k+1}^* Y + C_{k+1}^* C_{k+1} = 0$. Substituting
instead the matrix $\check Y_{k+1}$ we obtain that the residual satisfies
{\footnotesize
\begin{eqnarray*}
\rho_k &:=& T_{k+1}^* \check Y_{k+1} + \check Y_{k+1} T_{k+1} 
- \check Y_{k+1} B_{k+1}B_{k+1}^* \check Y_{k+1} + C_k^* C_k \\
&=& 
V_{k+1}^*( A^* V_{k+1} \check Y_{k+1} V_{k+1}^* +  V_{k+1} \check Y_{k+1} V_{k+1}^* A 
- V_{k+1} \check Y_{k+1} V_{k+1}^* B B^* V_{k+1} \check Y_{k+1} V_{k+1}^*
 + C^* C ) V_{k+1} \\ 
&=& 
V_{k+1}^*( A^* X_k + X_k  A 
- X_k B B^* X_k + C^* C ) V_{k+1} .
\end{eqnarray*}
}%
Therefore, $\|\rho_k\| \le \|R_k\|$. Using Theorem \ref{th:error}, 
if $\|Y_{k+1} - \check Y_{k+1}\| < 1/( 3\|B_{k+1}\|^2 \|\Omega_{Y_{k+1}}^{-1}\|)$
and $\|\rho_{k}\| \le 1/(4\|B_{k+1}\|^2 \|\Omega_{Y_{k+1}}^{-1}\|^2)$ then
$$
\|Y_{k+1} - \check Y_{k+1}\| \le 2 \|\Omega_{Y_{k+1}}^{-1}\| \|\rho_{k}\| .
$$
Noticing that $\|Y_{k+1} - \check Y_{k+1}\| = \|X_{k+1} - X_{k}\|$ the result follows.
\end{proof}

\section{Rational Krylov subspace approximation}\label{sec:RKSM}
%
The approximation quality of projection methods depends on
the choice of the approximation space ${\cal K}_k$. 
In the case of the
Lyapunov and Sylvester equations, a classical choice is
the Krylov subspace
${\cal K}_k = {\rm range}([C^*, A^*C^*, \ldots, (A^*)^{k-1}C^*])$,
first introduced for this problem by Saad in \cite{Saad1990a}. 
Note that in general, $C^*\in\RR^{n\times p}$ satisfies $p\ge 1$, therefore
the space is in fact a ``block'' space, whose dimension is not greater
than $d_k=pk$.
More recently and motivated by the reduction of dynamical systems,
rational Krylov subspaces have shown to be very attractive.
For ${\mathbf s} = [s_1, s_2, \ldots]$,
with $s_j \in \CC^+$, they are given by
$$
{\cal K}_k(A,C^*, {\mathbf s}) := {\rm range}([C^*, (A-s_2 I)^{-1}C^*,  
\ldots,
\prod_{j=1}^{k-1}(A-s_{j+1} I)^{-1}C^*])  .
$$
If the problem data are real, the shifts are included in conjugate pairs. Moreover,
$\Re(s_j)>0$ therefore all inverses exist for $A$ stable.
We remark that the first block of columns generating ${\cal K}_k$ is simply
the matrix $C^*$; this corresponds to using an infinite parameter $s_1 = \infty$ as
first shift, and this will be an assumption throughout. 
Including $C^*$ into the space is crucial for
convergence, since the whole constant matrix term is exactly represented
in the approximation space.  The effectiveness of the space now depends
on the choice of the parameters $s_j$, $j=2, 3, \ldots$. A lot of work has
been devoted to the analysis of ideal shifts, due to the relevance of
rational Krylov subspaces in eigenproblems \cite{Ruhe1984},\cite{Olsson2006},
matrix function evaluations \cite{Guettel.survey.13},\cite{DLZ10},\cite{Guettel.Knizhnerman.11},
and Model Order Reduction \cite{Grimme1997},\cite{Penzl2000b},%
\cite{Druskin.Simoncini.11}; we refer the readers to 
\cite{Simoncini.survey13} and to the references cited above.
We mention that for linear matrix equations, the choice of $s_j \in \{0, \infty\}$
seems to be particularly effective in many cases, since the computational cost of solving
with the coefficient matrix at each iteration can be somewhat mitigated,
without dramatically sacrificing the asymptotic convergence rate. 
Numerical experiments reported in \cite{Simoncini.Szyld.Monsalve.13} show
that for the algebraic Riccati equation this is no longer the case: the general
rational Krylov subspace appears to be superior in all considered examples, in
terms of subspace dimension, if
the shifts are properly selected. This comparison deserves further study \cite{Simoncini.16}.
For the sake of simplicity of exposition or unless it is explicitly
stated, in the rest of this section and its
subsections we assume that $C$ has a single row, that is $p=1$. There is no relevant
difference for $p>1$, except that the same shift is applied to a block of $p$ vectors,
and that the involved matrices have dimensions depending on $pk$.

For $k \ge 1$, the rational Krylov subspace with shifts $s_1, s_2, \ldots, s_{k}$
satisfies the following Arnoldi relation\footnote{The conjugate-transposition
in $T_k^*$ is used for consistency in the notation employed for the reduced Riccati equation.}
(see, e.g., \cite{Deckers.Bultheel.07}, \cite{Lin.Simoncini.13a}):
\begin{eqnarray}\label{eqn:arnoldi}
A^* V_k = V_k T_k^* + \hat v_{k+1} g_k^*, \qquad V_k^*V_k = I,
\end{eqnarray}
where ${\cal K}_k = {\rm range}(V_k)$,  and
$\hat v_{k+1} {\pmb\beta} = v_{k+1} s_{k} - (I-V_kV_k^*) A^* v_{k+1}$ is the QR
decomposition of the right-hand side matrix,
and with $g_k^*= {\pmb\beta} h_{k+1,k} E_k^* H_k^{-1}$. The matrix 
$$
\begin{bmatrix} H_k \\ h_{k+1,k} E_k^* \end{bmatrix}
$$ 
contains the
orthogonalization coefficients that generate the orthonormal columns of $V_{k+1}$ (see, e.g., 
\cite{Druskin.Simoncini.11}). We set $V_1 \beta_0= C^*$, the reduced QR factorization of $C^*$.
 By construction, the matrix $[V_k, \hat v_{k+1}]$ has orthonormal columns as well.

\begin{proposition}\label{prop:Xk_riccati}
The matrix $X_k$ satisfies
the following algebraic Riccati equation
$$
(A^* - \hat v_{k+1} f_k^*) X + X (A - f_k \hat v_{k+1}^*) - X BB^*X + C^* C = 0,
$$
where $f_k = V_k g_k$ and $g_k$ is as in (\ref{eqn:arnoldi}). 
\end{proposition}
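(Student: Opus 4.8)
The plan is to start from the Galerkin-projected reduced Riccati equation \nref{eqn:Yreduced} satisfied by $Y_k$, and to lift it back to the full space by conjugating with $V_k$. Explicitly, multiplying \nref{eqn:Yreduced} on the left by $V_k$ and on the right by $V_k^*$, and inserting $V_k^*V_k=I$ in the appropriate places, one gets
$$
V_k T_k^* Y_k V_k^* + V_k Y_k T_k V_k^* - V_k Y_k B_k B_k^* Y_k V_k^* + V_k C_k^* C_k V_k^* = 0.
$$
The first step is to recognize each term in terms of $X_k = V_k Y_k V_k^*$. The quadratic term is immediate: $V_k Y_k B_k B_k^* Y_k V_k^* = V_k Y_k V_k^* B B^* V_k Y_k V_k^* = X_k B B^* X_k$, using $B_k = V_k^* B$. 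The constant term needs a little care: $V_k C_k^* C_k V_k^* = V_k V_k^* C^* C V_k V_k^*$, which is \emph{not} $C^*C$ in general. Here I would invoke the standing assumption that $s_1 = \infty$, so that $C^* \in \mathrm{range}(V_k)$, i.e. $V_k V_k^* C^* = C^*$; hence $V_k C_k^* C_k V_k^* = C^* C$. This is exactly the place where the "$C^*$ is in the space'' hypothesis is used, and it should be flagged.

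The heart of the argument is rewriting the two linear terms $V_k T_k^* Y_k V_k^*$ and $V_k Y_k T_k V_k^*$ using the rational Arnoldi relation \nref{eqn:arnoldi}, $A^* V_k = V_k T_k^* + \hat v_{k+1} g_k^*$. From this, $V_k T_k^* = A^* V_k - \hat v_{k+1} g_k^*$, so
$$
V_k T_k^* Y_k V_k^* = A^* V_k Y_k V_k^* - \hat v_{k+1} g_k^* Y_k V_k^* = A^* X_k - \hat v_{k+1} (V_k g_k)^* X_k = A^* X_k - \hat v_{k+1} f_k^* X_k,
$$
where $f_k = V_k g_k$ and I have used $g_k^* Y_k V_k^* = g_k^* V_k^* V_k Y_k V_k^* = (V_k g_k)^* X_k$. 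Taking conjugate transposes of the Arnoldi relation gives $V_k T_k = V_k^* A - g_k \hat v_{k+1}^* $ wait --- more carefully, $T_k V_k^* = V_k^* A - g_k \hat v_{k+1}^*$ is \emph{not} quite what \nref{eqn:arnoldi} says; rather, conjugating \nref{eqn:arnoldi} yields $V_k^* A = T_k V_k^* + g_k \hat v_{k+1}^*$, hence $V_k Y_k T_k V_k^* = V_k Y_k (V_k^* A - g_k \hat v_{k+1}^*) = X_k A - X_k f_k \hat v_{k+1}^*$, again using $V_k Y_k g_k = V_k Y_k V_k^* V_k g_k = X_k f_k$. Substituting these two identities and the simplifications of the quadratic and constant terms into the lifted equation gives
$$
A^* X_k - \hat v_{k+1} f_k^* X_k + X_k A - X_k f_k \hat v_{k+1}^* - X_k B B^* X_k + C^* C = 0,
$$
which is precisely the claimed equation after grouping $(A^* - \hat v_{k+1} f_k^*) X_k + X_k (A - f_k \hat v_{k+1}^*)$.

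I expect the main obstacle to be bookkeeping rather than depth: making sure the insertions of $V_k^* V_k = I$ are legitimate (they are, since $Y_k$ already lives in the reduced space and $V_k^* V_k = I$), and in particular nailing down the $C^*C$ term, which genuinely requires the $s_1 = \infty$ convention and would fail otherwise. A secondary point worth stating explicitly is that $\hat v_{k+1}$ and $g_k$ (hence $f_k$) are exactly the quantities appearing in \nref{eqn:arnoldi}, so no new objects are introduced. Everything else is a direct substitution, so I would present the proof as: (i) lift \nref{eqn:Yreduced} by $V_k(\cdot)V_k^*$; (ii) simplify the quadratic and constant terms using $B_k = V_k^*B$ and $C^* = V_k V_k^* C^*$; (iii) replace $V_k T_k^*$ and $T_k V_k^*$ via \nref{eqn:arnoldi} and its conjugate transpose; (iv) collect terms.
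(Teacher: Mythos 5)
Your proof is correct and is essentially the paper's argument in a different order: the paper asserts the standard block expression $R_k = \hat v_{k+1}g_k^*Y_kV_k^* + V_kY_kg_k\hat v_{k+1}^*$ and moves those terms to the left, while you derive the same identity by lifting the reduced equation \nref{eqn:Yreduced} and invoking the Arnoldi relation \nref{eqn:arnoldi} — the same three ingredients (Arnoldi relation, reduced Riccati equation, $C^*\in\mathrm{range}(V_k)$) and the same algebra. Your explicit flagging of where $V_kV_k^*C^*=C^*$ is needed is a detail the paper leaves implicit, but it does not change the route.
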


\begin{proof}
The residual satisfies 
\begin{eqnarray}
R_k &=& [ V_k, \hat v_{k+1}] 
\begin{bmatrix} 0 & Y_k g_k \\ g_k^* Y_k & 0 \end{bmatrix} 
\begin{bmatrix} V_k^* \\ \hat v_{k+1}^* \end{bmatrix}  \nonumber \\
&=& \hat v_{k+1} g_k^* Y_k V_k^* + V_k Y_k g_k \hat v_{k+1}^* =
\hat v_{k+1} g_k^*V_k^* X_k + X_k V_k g_k \hat v_{k+1}^* . \label{eqn:res_sum}
\end{eqnarray}
Substituting into equation (\ref{eqn:res}) and collecting terms the
result follows.
\end{proof}

Since $\|f_k^* X_k\|  = \|R_k\|/\sqrt{2}$, the modified equation of Proposition~\ref{prop:Xk_riccati}
tends to the original Riccati equation as convergence takes place. 
However, we cannot infer that $X_k$ is close
to $X$ in the backward error sense, since $\hat v_{k+1} f_k^*$ is not small
in general.

\subsection{The adaptive rational Krylov subspace}
Several different selection strategies have been proposed for the shifts $s_j$. In the linear
equation case, Penzl (\cite{Penzl2000b}) suggested a pre-processing for the
computation of a fixed number of shifts, which are then applied cyclically. 
More recently, a greedy adaptive strategy was proposed
in \cite{Druskin.Simoncini.11} for the same class of problems, which determines the 
next shift during the computation, so
that the process can automatically learn from the convergence behavior of the method. The shifts
are selected by minimizing a particular rational function on an approximate and
adaptively adjusted spectral region of $A$.
In \cite{Lin.Simoncini.15} it was observed that for the Riccati equation the inclusion
of information on $BB^*$ during the shift computation 
-- in the form of eigenvalues of $V_k^*(A^*-X_k BB^*)V_k$ -- 
may be beneficial in certain
cases.  In the following
 we aim to justify this choice. To this end, we need to set a rational function framework
that parallels some of the matrix relations obtained in the previous sections.

A relation corresponding to (\ref{eqn:arnoldi}) can be obtained by using orthogonal
rational functions with respect to some inner product; see, e.g.,
 \cite{Deckers.Bultheel.07}. We note that
each $v_j$ can be written as $v_j=\varphi_j(A) c/\|c\|$, for some orthogonal rational function
$\varphi_j = p_j/q_{j-1}$, where $p_j, q_{j-1}$ are polynomials of degree at
most $j$ and $j-1$, respectively. For $j=0$ we define $\varphi_0 =1$.
Let 
$\Phi_{k-1}(\lambda)=[\varphi_0(\lambda), \varphi_1(\lambda), \ldots, \varphi_{k-1}(\lambda)]$.
Then, 
\begin{eqnarray}\label{eqn:Phirat}
\lambda \Phi_{k-1}(\lambda) = \Phi_{k-1}(\lambda) T_k^* + \hat\varphi_k(\lambda) g_k^* ;
\end{eqnarray}
from (\ref{eqn:Phirat}) it follows that $\theta$ is a zero of $\hat\varphi_k$ if
and only $\theta$ is an eigenvalue of $T_k$.
We refer to \cite[section 2.2]{BGV.10} for a similar relation, where 
a different Arnoldi-type relation is used.


A first attempt to justify the use of information from $A - BB^*X_k$ can be
obtained by generalizing the argument in \cite{Druskin.Simoncini.11}, 
working as if the problem were linear.
For the sake of notational simplicity, for the rest of this
section we let ${\cal A}_k = A - BB^*X_k$
and ${\cal T}_k = V_k^* {\cal A}_k V_k = T_k  - B_kB_k^*Y_k$.
Using (\ref{eqn:res}) we can write the residual as
\begin{eqnarray}
R_k &=&(A^*  - X_k BB^*) X_k + X_k (A - BB^* X_k) + C^* C + X_k BB^* X_k \nonumber\\
&=& {\cal A}_k^* X_k + X_k {\cal A}_k + {\cal D}_k {\cal D}_k^*,\label{eqn:resnew}
\end{eqnarray}
where ${\cal D}_k = [C^*, X_k B]$. We observe that all columns of ${\cal D}_k$
belong to $K_k(A^*, C^*,{\mathbf s})$, since ${\cal D}_k = V_k[E_1 \beta_0 , Y_k B_k]$.

\begin{remark}\label{rem:inv}
The rational Krylov subspace $K_k(A^*, C^*, {\mathbf s})$ 
satisfies an Arnoldi-type property for the matrix ${\cal A}_k$.
 Indeed,
\begin{eqnarray*}
{\cal A}_k^* V_k &=& A^* V_k - X_k BB^* V_k \\
&=& V_k T_k^* + \hat v_{k+1} g_k^* - V_k Y_k B_k B_k^* \\
&=& V_k (T_k^*- Y_k B_k B_k^*) + \hat v_{k+1} g_k^*  = V_k {\cal T}_k^* + \hat v_{k+1} g_k^* .
\end{eqnarray*}
\end{remark}
By using the expression of the residual (\ref{eqn:resnew}) as if it were the residual
matrix of a Lyapunov equation, we can follow
the same reasoning as in \cite{Druskin.Simoncini.11} for
the selection of the next shift. However,
as opposed to the linear case, all involved matrices now depend on
the iteration $k$.  To simplify the presentation, in the following argument we assume
that $C^*=c \in\RR^n$.
Consider the shifted system $({\cal A}_k^*-sI) x = c$,
and an approximate solution $x_k \in K_k({\cal A}_k^*, c, {\mathbf s})$.
Then the residual can be written as
\begin{eqnarray}\label{eqn:shifted}
c - ({\cal A}_k^*-sI) V_k ({\cal T}_k^* - s I)^{-1} e_1 \beta_0
=
\frac{\psi_k({\cal A}_k) c}{\psi_k(s)}, \qquad
\psi_k(z) = \prod_{j=1}^k \frac{z - \lambda_j}{z-s_j} ,
\end{eqnarray}
where $\lambda_j$ are the eigenvalues of ${\cal T}_k$.
%
The next shift $s_{k+1}$ is then determined so that
$$
s_{k+1} = \arg \left (\max_{s \in \partial {\mathbb S}_k} \left|\frac 1 {\psi_k(s)}\right| \right ),
$$
where ${\mathbb S}_k \subset \CC^+$ approximates the mirrored spectral region of ${\cal A}_k$,
and $\partial {\mathbb S}_k$ is its border. 
Note that $\psi_k$ is a multiple of $\hat\varphi_k$ in (\ref{eqn:Phirat}).
A major practical difference from
the adaptive procedure in the Lyapunov equation case is that ${\mathbb S}_k$ will change
at each iteration in agreement with the modifications in
 the spectrum of ${\cal A}_k$. 
In fact, thanks to the Arnoldi relation of Remark~\ref{rem:inv}, the unknown spectral region
of ${\cal A}_k$ is replaced with the spectral region of ${\cal T}_k$, which is computable
after the approximate solution $Y_k$ is determined.
This approach is precisely the one explored in
\cite{Lin.Simoncini.15} for the Riccati equation.
 As opposed to an adaptive shift
selection based on $A$ (see, e.g., \cite{Simoncini.Szyld.Monsalve.13}), this
approach includes information on the second order coefficient matrix, which may be
crucial when the term $-BB^*X$ in $A-BB^*X$ significantly modifies the
spectral properties of $A$ (see Example \ref{ex:mod}).
In the next section we give a rigorous formalization of this argument.

\subsection{A new expression for the residual and the choice of shifts}
In \cite{Beckermann.11} a new expression for the residual of the
Sylvester equation was proposed. We extend this expression 
to the case of the Riccati residual matrix. 
The new expression allows an
interpretation of the two-term sum in (\ref{eqn:res_sum})
by means of rational functions.
Note that the result also holds
for $B=0$, therefore  its proof provides a more elementary proof 
for the Lyapunov equation than in \cite{Beckermann.11}.

\begin{proposition} 
\label{lemma:res}
Assume that the columns of $C^*$ belong to Range$(V_k)$, and
let ${\cal T}_k = V_k^* {\cal A}_k V_k = T_k - B_kB_k^*Y_k$.
Then the residual $R_k$ satisfies
$$
R_k = \widehat R_kV_k^* + V_k\widehat R_k^*, \qquad {\rm with} \quad
\widehat R_k = A^* V_k Y_k + V_k Y_k {\cal T}_k + C^* (C V_k) ,
$$
so that $\|R_k\|_F = \sqrt{2}\|\widehat R_k\|_F$.
\end{proposition}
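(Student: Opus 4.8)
The plan is to start from the residual equation in the form (\ref{eqn:resnew}), namely $R_k = {\cal A}_k^* X_k + X_k {\cal A}_k + {\cal D}_k {\cal D}_k^*$ with ${\cal D}_k = [C^*, X_k B] = V_k[E_1\beta_0, Y_k B_k]$, and to substitute $X_k = V_k Y_k V_k^*$ everywhere, using the Arnoldi-type relation for ${\cal A}_k$ from Remark~\ref{rem:inv}, i.e. ${\cal A}_k^* V_k = V_k {\cal T}_k^* + \hat v_{k+1} g_k^*$. First I would expand ${\cal A}_k^* X_k = {\cal A}_k^* V_k Y_k V_k^* = (V_k {\cal T}_k^* + \hat v_{k+1} g_k^*) Y_k V_k^*$, and symmetrically $X_k {\cal A}_k = V_k Y_k (V_k^* {\cal A}_k)$. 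The point is that $V_k^* {\cal A}_k V_k = {\cal T}_k$ by definition, but $V_k^* {\cal A}_k \ne {\cal T}_k V_k^*$ in general — so I must keep ${\cal A}_k$ acting to the right and only insert $V_k V_k^*$ where it is legitimate, or equivalently write everything in terms of $A^* V_k$ directly rather than ${\cal A}_k^* V_k$ to avoid circularity.

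The cleaner route, which I would actually take, is to go back to the definition $R_k = A^* X_k + X_k A - X_k BB^* X_k + C^* C$, substitute $X_k = V_k Y_k V_k^*$, and factor. Write $A^* X_k = (A^* V_k) Y_k V_k^*$ and $X_k A = V_k Y_k (V_k^* A) = V_k Y_k (A^* V_k)^* $. Using the Arnoldi relation (\ref{eqn:arnoldi}), $A^* V_k = V_k T_k^* + \hat v_{k+1} g_k^*$, and then $X_k BB^* X_k = V_k Y_k B_k B_k^* Y_k V_k^*$ and $C^* C = V_k (E_1\beta_0)(\beta_0^* E_1^*) V_k^* = V_k (V_k^* C^*)(C V_k) V_k^*$ since $C^*$ lies in Range$(V_k)$. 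Now define $\widehat R_k := A^* V_k Y_k + V_k Y_k {\cal T}_k + C^* (C V_k)$ and check, by direct substitution, that $\widehat R_k V_k^* + V_k \widehat R_k^*$ reproduces $R_k$. Expanding: $A^* V_k Y_k V_k^* = A^* X_k$ gives one term and its conjugate gives $X_k A$; $V_k Y_k {\cal T}_k V_k^* = V_k Y_k (T_k - B_k B_k^* Y_k) V_k^* = V_k Y_k T_k V_k^* - X_k BB^* X_k$, and its conjugate gives $V_k T_k^* Y_k V_k^* - X_k BB^* X_k$. Here the key cancellation is needed: $V_k Y_k T_k V_k^* + V_k T_k^* Y_k V_k^*$ appears with the wrong sign relative to what we want, so I would use the reduced Riccati equation (\ref{eqn:Yreduced}), $T_k^* Y_k + Y_k T_k = Y_k B_k B_k^* Y_k - C_k^* C_k$, to rewrite $V_k(T_k^* Y_k + Y_k T_k)V_k^* = X_k BB^* X_k - C^* C$. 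Combining all the pieces, the $X_k BB^* X_k$ and $C^* C$ terms collapse to exactly the right residual; the $C^*(CV_k)V_k^* + V_k(CV_k)^* C = C^* C + C^* C$ would overcount, so the bookkeeping has to be done carefully — this is where the factor-of-two term $\widehat R_k = \ldots + C^*(CV_k)$ (not $2C^*(CV_k)$) is calibrated against the $T_k^* Y_k + Y_k T_k$ substitution.

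The main obstacle is precisely this accounting of the constant term and the reduced-equation substitution: one has to make sure that splitting $R_k$ into $\widehat R_k V_k^* + V_k \widehat R_k^*$ does not double-count $C^* C$ nor the quadratic term, and that the seemingly asymmetric definition of $\widehat R_k$ (with $A^* V_k Y_k$ on one factor but $V_k Y_k {\cal T}_k$ — a reduced quantity — on the other) is consistent. I expect that after substituting (\ref{eqn:Yreduced}) the two $V_k Y_k T_k V_k^*$-type terms are exactly what convert $\widehat R_k V_k^* + V_k \widehat R_k^*$ into $A^* X_k + X_k A - X_k BB^* X_k + C^* C = R_k$. Once the identity $R_k = \widehat R_k V_k^* + V_k \widehat R_k^*$ is established, the norm statement is immediate: since $V_k$ has orthonormal columns, $\|\widehat R_k V_k^*\|_F = \|\widehat R_k\|_F$, and because the two summands have orthogonal column (resp. row) spaces in the appropriate sense — more directly, writing $R_k = [V_k, \text{something}]\begin{bmatrix} 0 & \star \\ \star^* & 0\end{bmatrix}[\cdots]^*$ in the spirit of (\ref{eqn:res_sum}), or just computing $\|R_k\|_F^2 = \mathrm{tr}((\widehat R_k V_k^* + V_k \widehat R_k^*)(\cdots)^*)$ and observing the cross terms contribute the same as the diagonal ones — one gets $\|R_k\|_F^2 = 2\|\widehat R_k\|_F^2$, hence $\|R_k\|_F = \sqrt{2}\,\|\widehat R_k\|_F$.
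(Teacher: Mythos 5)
Your proposal is correct and follows essentially the same route as the paper: expand $\widehat R_k V_k^* + V_k \widehat R_k^*$ directly, use $C^*CV_kV_k^*=C^*C$ (from $C^*\in\mathrm{Range}(V_k)$) and the reduced Riccati equation $T_k^*Y_k+Y_kT_k-Y_kB_kB_k^*Y_k+C_k^*C_k=0$ to collapse the sum to $R_k$, and then get the norm identity from the orthogonality of the two summands. One small correction in the last step: the cross terms in $\|\widehat R_kV_k^*+V_k\widehat R_k^*\|_F^2$ do not ``contribute the same as the diagonal ones''--- they vanish, because $V_k^*\widehat R_k = T_k^*Y_k+Y_kT_k-Y_kB_kB_k^*Y_k+C_k^*C_k=0$ by the reduced equation, which is exactly the fact the paper invokes and which your ``orthogonal column spaces'' remark already encodes.
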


\begin{proof}
By substituting $\widehat R_k$ in the expression for $R_k$ we obtain,
\begin{eqnarray*}
 \widehat R_kV_k^* + V_k\widehat R_k^* & = &
A^*X_k + V_k Y_k T_k V_k^* - V_k Y_k B_k B_k^* Y_k V_k^* + C^*C \\
&& + X_k A
+ V_k T_k^* Y_k V_k^* - V_k Y_k B_k B_k^* Y_k V_k^* + C^*C \\
&=& R_k + 0,
\end{eqnarray*}
where the reduced equation (\ref{eqn:Yreduced}) and
$C^* C V V^* = C^* C$ were used; this proves the first relation. The
norm relation follows from $V_k^* \widehat R_k = 0$, which can
be readily verified.
\end{proof}

\vskip 0.05in

We shall call $\widehat R_k$ the ``semi''-residual matrix.
The proposition above shows that the residual norm of the Galerkin method for
the Riccati equation is the same as that of an associated Sylvester
equation times the constant $\sqrt{2}$.
As a consequence, we can at least formally state that
$V_k Y_kV_k^*$ is a solution to the
Riccati equation (\ref{eqn:main}), that is 
$R_k=0$, if and only if $Z_k=V_k Y_k$ is the solution to the
Sylvester equation
\begin{eqnarray}\label{eqn:sylvAT}
A^* Z + Z {\cal T}_k +  C^* C V_k = 0 ,
\end{eqnarray}
where ${\cal T}_k$ typically has dimensions much smaller than $A$.
Note that this Sylvester equation is in terms of 
$A$ (and not of ${\cal A}_k = A - BB^*X_k$), but also in terms of ${\cal T}_k$.
Let 
\begin{eqnarray}\label{eqn:Phi}
\psi_k(z) = \frac{{\rm det}(zI - T_k)}{\prod_{j=1}^k(z-s_j)}
 = \frac{\prod_{j=1}^k(z-\theta_j)}{\prod_{j=1}^k(z-s_j)} ,
\end{eqnarray}
where $\theta_j$ are the eigenvalues of $T_k = V_k^* A V_k$.
Then the following representation holds for the semi-residual $\widehat R_k$.
The result was first proved for the Sylvester equation
in \cite{Beckermann.11}  and then generalized to the multi-term linear case
in \cite{Beckermann.Kressner.Tobler.13}.
We prove the result for $C^*$ having a single column, the generalization 
to multiple columns can be
obtained by working with each column of $C$, since the whole matrix $C$
is used to build the approximation space.

\begin{theorem}\label{th:semires}
Assume that $p=1$, that is $C^* = c \in \RR^{n}$.
Let $\psi_k$ be the rational function defined in (\ref{eqn:Phi}) and
assume that $ {\cal T}_k = V_k^* {\cal A}_k V_k$ is diagonalizable.
The semi-residual $\widehat R_k$ of Proposition~\ref{lemma:res} satisfies
$$
\widehat R_k = \psi_k(A^*) c c^*V_k (\psi_k(-{\cal T}_k))^{-1} .
$$
\end{theorem}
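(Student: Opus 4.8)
The plan is to exploit the Sylvester equation characterization already obtained: by Proposition~\ref{lemma:res} (and the displayed Sylvester equation \nref{eqn:sylvAT}), the semi-residual equals
$\widehat R_k = A^* Z_k + Z_k {\cal T}_k + c c^* V_k$, where $Z_k = V_k Y_k$. So the task reduces to finding a closed form for $A^* Z_k + Z_k {\cal T}_k + cc^*V_k$ in terms of the rational function $\psi_k$. The key structural fact I would use is the Arnoldi-type relation of Remark~\ref{rem:inv}, ${\cal A}_k^* V_k = V_k {\cal T}_k^* + \hat v_{k+1} g_k^*$, together with the fact that $c \in \mathrm{Range}(V_k)$, i.e. $c = V_k e_1 \beta_0$. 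This says that $V_k$ spans (essentially) the rational Krylov subspace $K_k({\cal A}_k^*, c, {\mathbf s})$ and that ${\cal T}_k$ is the compression of ${\cal A}_k^*$ onto it.

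First I would record the first-column relation: $V_k = K_k({\cal A}_k^*, c, {\mathbf s})$ in the sense that $\mathrm{Range}(V_k)$ is spanned by $c, ({\cal A}_k^* - s_2 I)^{-1} c, \dots$ — this follows from Remark~\ref{rem:inv} since the Arnoldi relation for ${\cal A}_k^*$ with the same shifts generates the same nested spaces as for $A^*$ (the difference $X_kBB^*$ lies in the span). Next, for each eigenvalue $\theta_j$ of $T_k$ (equivalently, each root of $\psi_k$ in the numerator), I would use the rational-function relation \nref{eqn:Phirat}, $\lambda\Phi_{k-1}(\lambda) = \Phi_{k-1}(\lambda) T_k^* + \hat\varphi_k(\lambda) g_k^*$, noting that $\hat\varphi_k$ vanishes exactly at the $\theta_j$. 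The central computation is then to plug $Z_k = V_k Y_k$ into $A^* Z_k + Z_k {\cal T}_k + cc^*V_k$ and identify, column by column in an eigenbasis of ${\cal T}_k$, that $\widehat R_k w_j = \psi_k(A^*) cc^*V_k w_j /\psi_k(-\mu_j)$ where $\mu_j$ are the eigenvalues of ${\cal T}_k$ and $w_j$ the corresponding eigenvectors. This is where diagonalizability of ${\cal T}_k$ is used: it lets me write $(\psi_k(-{\cal T}_k))^{-1}$ spectrally and reduce to a scalar identity per eigenvector. The scalar identity is essentially a partial-fraction / interpolation statement: the residual of the shifted system $({\cal A}_k^* - sI)x = c$ projected onto $K_k$ is $\psi_k({\cal A}_k)c/\psi_k(s)$ (this is \nref{eqn:shifted}), and evaluating the Sylvester residual at $s = -\mu_j$ recovers the claimed formula, with $A^*$ replacing ${\cal A}_k^*$ in the numerator because $\psi_k({\cal A}_k)c = \psi_k(A^*)c$ on $\mathrm{Range}(V_k)$ — indeed $({\cal A}_k^* - A^*)V_k = -X_kBB^*V_k$ contributes nothing to the relevant Krylov combination since ${\cal D}_k$'s columns lie in $K_k$.

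The main obstacle I expect is precisely that last point: carefully justifying that $\psi_k(A^*)c = \psi_k({\cal A}_k^*)c$ when acting through the projected quantities, i.e. that replacing ${\cal A}_k$ by $A$ in the numerator is legitimate. This is not a triviality because ${\cal A}_k$ and $A$ do not commute and $\psi_k$ is a high-degree rational function; the resolution must come from the fact that $\psi_k(z)$ has its poles at the shifts $s_j$ and that $(A^* - s_jI)^{-1}c$ and $({\cal A}_k^* - s_jI)^{-1}c$ span the same increasing chain of subspaces (again Remark~\ref{rem:inv}), so the two rational-function images of $c$ agree up to a vector in a lower subspace that is annihilated by the projection against $V_k w_j$. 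I would handle this by an induction on $k$ mirroring the construction of the rational Krylov basis, or alternatively by invoking the exactness property of \nref{eqn:shifted} directly and only using the Sylvester reformulation \nref{eqn:sylvAT} to read off $\widehat R_k$ at the nodes $s=-\mu_j$. A secondary (purely bookkeeping) obstacle is tracking the normalization constants $\beta_0$, $g_k$, and the factor $cc^*V_k = \beta_0 c^*V_k$ so that the final formula comes out with the stated constant rather than an extraneous scalar; I would defer these to the end and fix them by checking one component.
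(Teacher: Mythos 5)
Your skeleton matches the paper's: start from the Sylvester form $\widehat R_k = A^*V_kY_k + V_kY_k{\cal T}_k + cc^*V_k$ of Proposition~\ref{lemma:res}, diagonalize ${\cal T}_k = Q\Theta Q^{-1}$, and treat each column of $\widehat R_kQ$ as the residual of a shifted linear system. But the step you single out as the main obstacle --- justifying $\psi_k(A^*)c=\psi_k({\cal A}_k^*)c$ so as to import the ${\cal A}_k$-based residual formula \nref{eqn:shifted} --- is a self-inflicted detour, not the crux, and you do not actually carry out either of the two resolutions you sketch. The shifted systems produced by the Sylvester form are systems in $A^*$, not in ${\cal A}_k^*$: setting $z_j=V_kY_kQe_j$, $c\eta_j=cc^*V_kQe_j$ and $r_j=\widehat R_kQe_j$, one has $r_j=(A^*+\theta_jI)z_j+c\eta_j$ with $z_j\in{\rm range}(V_k)={\cal K}_k(A^*,c,{\mathbf s})$; the matrix ${\cal A}_k$ enters only through the fact that the shifts $-\theta_j$ are the negatives of the eigenvalues of its projection ${\cal T}_k$. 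No comparison between $\psi_k(A^*)c$ and $\psi_k({\cal A}_k^*)c$ is needed, and indeed the identity as you state it is false in general: with $\psi_k$ as in \nref{eqn:Phi} its numerator is the characteristic polynomial of $T_k$, so $\psi_k(A^*)c$ is orthogonal to ${\rm range}(V_k)$, whereas $\psi_k({\cal A}_k^*)c$ would enjoy that property only if the numerator were the characteristic polynomial of ${\cal T}_k$.

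The decisive observation you are missing is the combination of the Galerkin condition with the Arnoldi relation \nref{eqn:arnoldi}: since $V_k^*\widehat R_k=0$ and $A^*V_k\in{\rm range}([V_k,\hat v_{k+1}])$, every column $r_j$ is a scalar multiple of $\hat v_{k+1}$, hence of $\hat\varphi_k(A^*)c\propto\psi_k(A^*)c$. Writing $z_j=\rho_j(A^*)c$ with $\rho_j$ a rational function having the prescribed poles and numerator degree at most $k-1$, the functional identity $(z+\theta_j)\rho_j(z)+\eta_j=\gamma_j\,\psi_k(z)$ evaluated at $z=-\theta_j$ gives $\gamma_j=\eta_j/\psi_k(-\theta_j)$, i.e. $r_j=\psi_k(A^*)c\,\eta_j/\psi_k(-\theta_j)$; collecting the columns and multiplying by $Q^{-1}$ yields the statement. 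This also disposes of your worry about tracking the normalization constants $\beta_0$ and $g_k$: the scalar is fixed by the interpolation condition at $-\theta_j$, not by bookkeeping the orthogonalization coefficients.
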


\begin{proof}
Let ${\cal T}_k = Q \Theta Q^{-1}$, with $\Theta={\rm diag}(\theta_1, \ldots, \theta_{k})$.
 then the result 
follows from standard arguments for shifted linear systems. Indeed,
substituting this decomposition into $\widehat R_k$ in Proposition~\ref{lemma:res}
it follows that
$\widehat R_k Q = A^* V_k Y_k Q + V_k Y_k Q \Theta + c c^*V_k Q$. Let
$Z := V_k Y_k Q = [ z_1, \ldots, z_k]$, $c \eta_j := c c^* V_k Q e_j$
 and $r_j = \widehat R_k Q e_j$, then we have
$$
r_j = (A^* + \theta_j I ) z_j + c \eta_j.
$$
Due to the Galerkin condition, the residuals $r_j$ are all proportional to $\hat v_{k+1}$,
therefore using (\ref{eqn:Phi})
they can be written as $r_j = \hat\varphi_k(A^*) c \eta_j / \hat\varphi_k(-\theta_j)$.
Collecting all columns we get
$\widehat R_k Q = \psi_k(A^*) c c^* V_k Q \psi_k(-\Theta)^{-1}$, where we recall that
$\psi_k$ is a multiple of $\hat\varphi_k$; multiplying from the
right by $Q^{-1}$ the result follows.
\end{proof}

We observe that the expression of the semi-residual generalizes the
residual formula for the shifted system in (\ref{eqn:shifted}) to the case
of matrix equations. The quantity $(\psi_k(-{\cal T}_k^*))^{-1}$
plays the same scaling role as the scalar
$1/\psi_k(s)$ in the shifted system in (\ref{eqn:shifted}).
This new relation thus appears to be of interest on
its own. Indeed, while for linear matrix equations a parallel
with shifted systems had already been
performed (see \cite[section 4.3]{Simoncini.survey13} and references
therein), the residual matrix associated to the special Sylvester
equation (\ref{eqn:sylvAT}) had not been explicitly written down in terms of
polynomials or rational functions.

The new expression for $\widehat R_k$ suggests a way to determine
the next shift $s_{k+1}$.
Indeed, we first recall that the numerator of the rational function $\psi_k$
is the characteristic polynomial of $T_k$, which thus minimizes the numerator
of $\psi_k$ among all monic polynomials of degree $k$. This makes $\|\psi_k(A)c\|$
small among all rational functions $\psi_k$ with fixed denominator and
monic numerator.
With the next shift we 
thus want to make the quantity $(\psi_k(-{\cal T}_k))^{-1}$
smaller in the expression for $\widehat R_k$. Therefore, we
determine for which $z$ in the spectral region of ${\cal T}_k$
 the quantity $(\psi_k(-z))^{-1}$ is large, and 
add a root there for the construction of the next function $\varphi_{k}$.
 Therefore, $s_{k+1}$
is chosen as the solution to the following problem
\begin{eqnarray}\label{eqn:news}
s_{k+1} = {\rm arg}\max_{s\in \partial{\mathbb S}_k} 
\left | \frac {1}{\psi_k(s)}\right | =
{\rm arg}\max_{s\in \partial{\mathbb S}_k}
\left| \frac {\prod_{j=1}^k(s-s_j)}{{\rm det}(sI - {\cal T}_k)} \right |,
\end{eqnarray}
where here ${\mathbb S}_k$ is a region enclosing the eigenvalues of $-{\cal T}_k$
and $\partial{\mathbb S}_k$ is its border. This approach should be
compared with the original algorithm that uses $T_k$ instead. 
 This modified selection strategy can be implemented
very easily, with a slight modification of the original algorithm in
\cite{Druskin.Simoncini.11}: the algorithm needs to compute the eigenvalues of
$ {\cal T}_k = T_k - B_k B_k^* Y_k$ instead of those of $T_k$ to determine
the corresponding convex hull. It is interesting to observe that for $A$
Hermitian, working with the non-Hermitian matrix ${\cal T}_k$ appears to be
more complex than working with the Hermitian matrix $T_k$.
On the other hand, the matrix ${\cal T}_k$ has a key role in the Riccati semi-residual 
matrix, and it takes into account the nonlinear term in the original equation.
Clearly, if the convex hulls of $T_k$ and ${\cal T}_k$ are similar,
and the same for those of $A$ and of $A-BB^*X$, then no major differences
will be observed between the two selection strategies. In other words, if the
field of values are similar, then the projection method based only on the linear
part will be able to decrease $\|\widehat R_k\|$ with a similar convergence rate.

We next report an example illustrating the
expected behavior of the rational Krylov method with or without the
inclusion of the term $- B_k B_k^* Y_k$ in the computation of the spectral
region in (\ref{eqn:news})\footnote{The Matlab (\cite{matlab7}) code for 
the rational Krylov subspace method for the Riccati equation is available at the author's webpage
{\tt http://www.dm.unibo.it/{\~{ }}simoncin/software.html}.}.

\begin{example}\label{ex:mod}
We consider a small built-up example, where $A$ is the Toeplitz matrix
$A=-{\tt toeplitz}(-1,-1.5,\underline{2.8},1,1,1)$ of size
n=700 (this small size allows us to easily compute all quantities for this
theoretical analysis). Moreover, $B=t {\mathbf 1}$ and $C=[1,-2,1,-2,1,-2,...]$;
this example is motivated by an example with similar data in \cite{Lin.Simoncini.15}.
The parameter $t$ takes the values $t_j = 5\cdot 10^{-j}$, so that for $j=3$,
$\|B\| \approx 1$.
The left plot of Figure~\ref{fig:rksm_1} shows the convergence history
(relative residual norm) of the rational Krylov method for each of the three different
values of $t$, when the shifts are adaptively computed on the spectral region of ${\cal T}_k$,
as in (\ref{eqn:news}).
The right plot of Figure~\ref{fig:rksm_1} shows the modification of the
convex hull of $A^* - X BB^*$ as $t$ varies. For the larger values of $t_j$,
the magnitude of $B$ significantly influences the spectral convex hull; by using the modified
shift computation strategy, the method is able to adapt to this change and capture
the new problem features. We remark that by using spectral information of $T_k$ instead,
the method takes about 12 iterations to converge, irrespective of the value of $t$.
We notice that for $B$ of rank one, the matrix $XBB^*$ is also rank one, with
a real positive eigenvalue whose magnitude depends on $B$ and thus on $t$. 
For $\|B\|$ large, Figure~\ref{fig:rksm_1} shows that for this example
only one eigenvalue of $A^*$ is significantly perturbed in $A^*-XBB^*$, causing
the extension of the original spectrum to the left, by an amount depending on $t$.
\end{example}

\begin{figure}[htb]
\centering
\includegraphics[width=2.0in, height=2.0in]{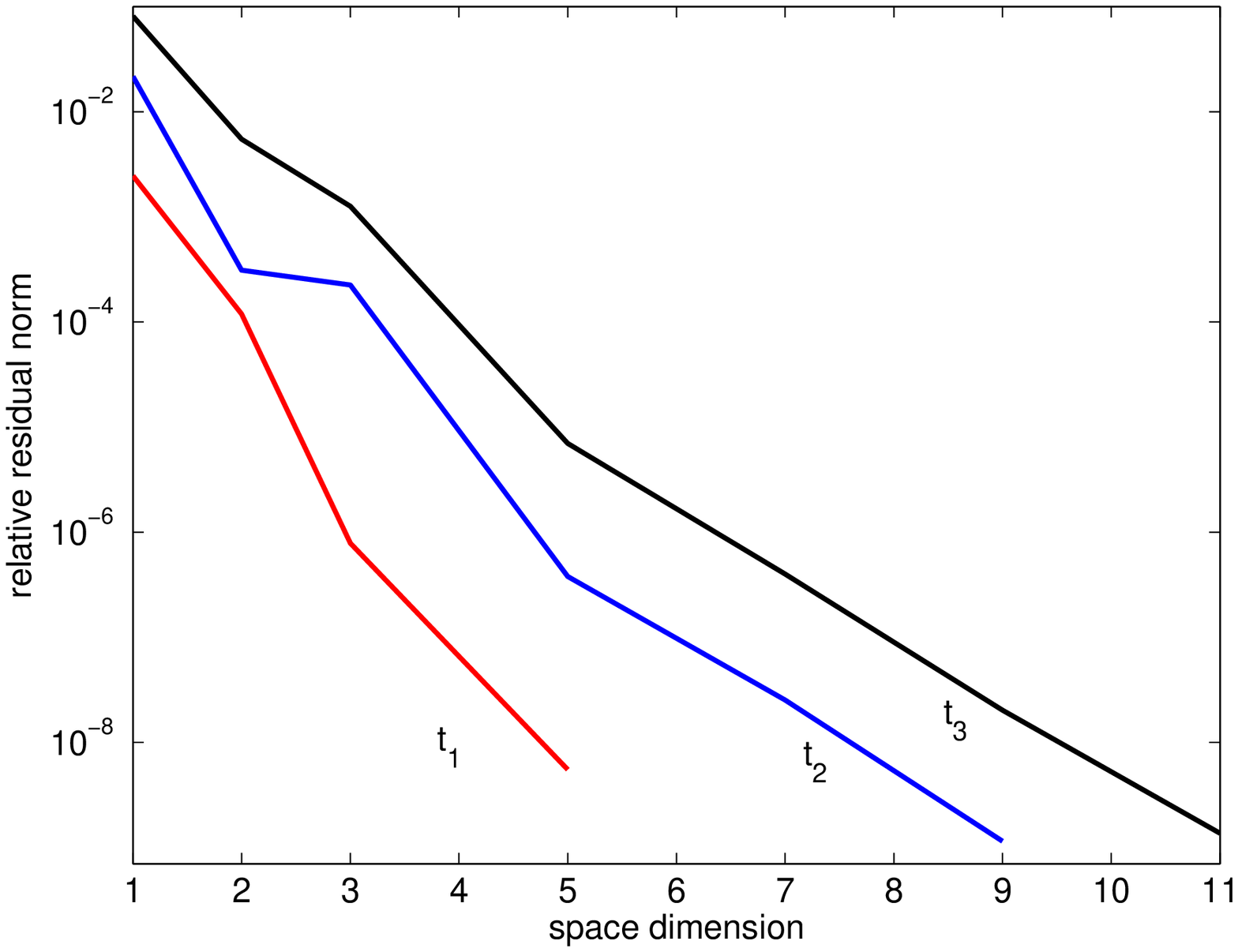}
\includegraphics[width=2.0in, height=2.0in]{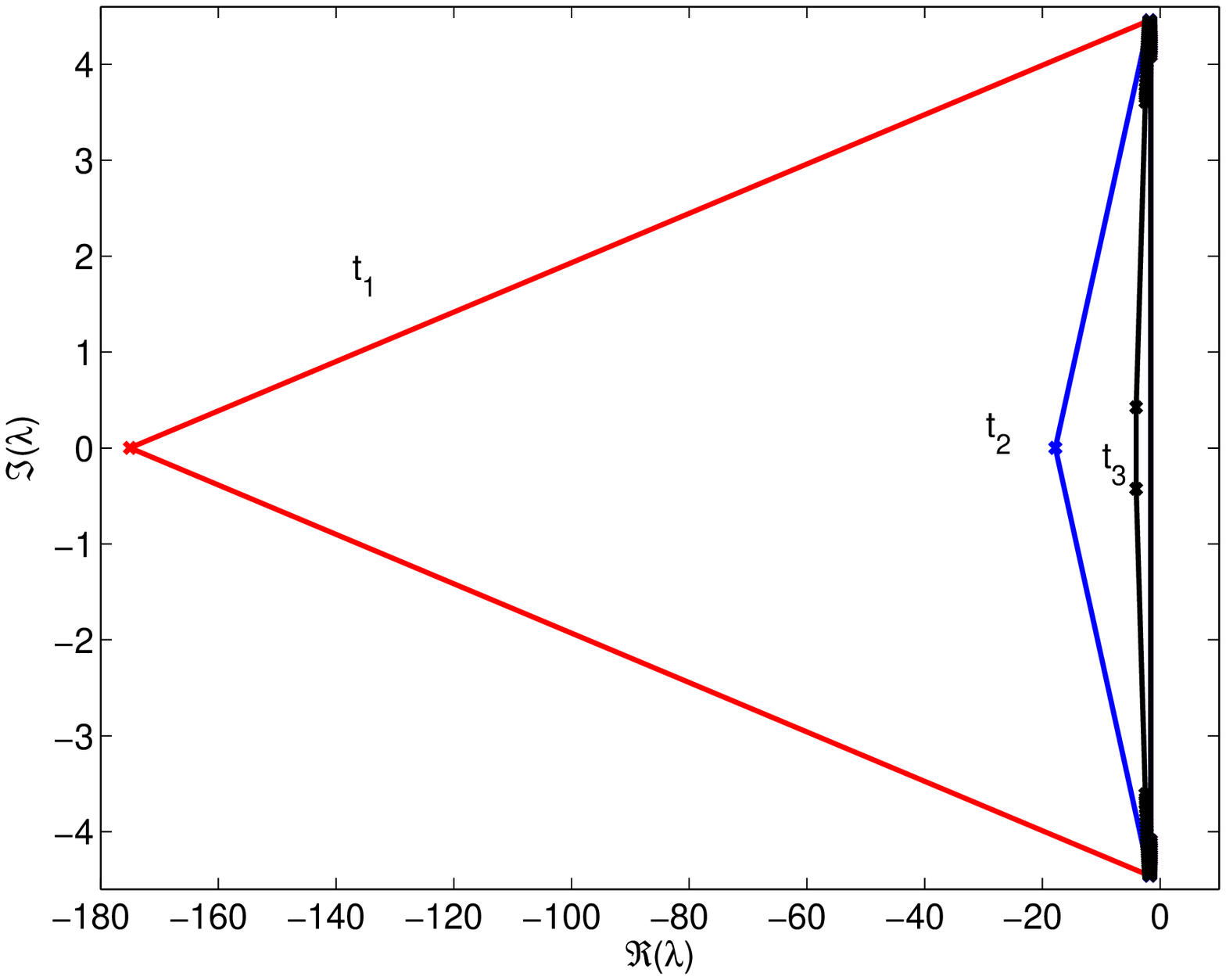}
\caption{{Example \ref{ex:mod}. Left: Convergence history of rational Krylov method
with modified shift selection as $t$ varies. Right: convex hull of $A^*-X BB^*$
as $t$ varies.  } \label{fig:rksm_1}
}
\end{figure}

Example~\ref{ex:mod} shows that for these data, the magnitude of $B$ influences the
residual convergence of the modified method in a counterintuitive way: the larger
its norm, the faster the method convergence. 
By using the modified shift selection, the isolated eigenvalue of $A^* - XBB^*$
(see Figure~\ref{fig:rksm_1}) is readily located, and the residual is forced
to be small in that region as well.

In the next example we explore the influence of the nonsymmetry of ${\cal T}_k$
in the shift computation, when $A$ is symmetric.

\begin{example}\label{ex:sym}
We consider the same setting as for Example~\ref{ex:mod}, except that now 
$A = A_0 \otimes I_{n_0} + I_{n_0} \otimes A_0$,
with $A_0 = {\tt toeplitz}(1, \underline{-2}, 1) \in \RR^{n_0\times n_0}$, with $n_0=30$
and $\otimes$ the Kronecker product,
giving rise to a $900\times 900$ symmetric negative definite matrix. These data
 represent the scaled finite difference
discretization of the Laplacian on the unit square with homogeneous boundary
conditions. As $t$ varies, we compare the performance of the method when ${\mathbb S} \subset\RR$ is
associated with the symmetric matrix $T_k$, with the case when ${\mathbb S} \subset\CC$
due to the use of ${\cal T}_k$; to emphasize this dependence will shall use
${\mathbb S}({\rm T}_k)$ and ${\mathbb S}({\cal T}_k)$, respectively. 
Table~\ref{tab:sym} shows the space dimension required by the two approaches
to reach an absolute residual norm of $10^{-9}$. Shown are also the absolute residual
and error norms at convergence, and the norm of the exact solution.
We report that all computed shifts were real also for ${\cal T}_k$.
The table shows that
the number of iterations for the residual to converge is always smaller when
${\mathbb S}({\cal T}_k)$ is used, and it decreases with  the
magnitude growth of the $B$ term, as in the previous example. 
We also notice that when using $T_k$, the final error is significantly
smaller than in the modified version of the method; apparently, the
residual lags behind in convergence, when ${\mathbb S}(T_k)$ is used.

Figure~\ref{fig:sym} displays the residual convergence history for the two approaches,
as $t$ varies. The initial steep phase of the residual in the modified approach
is granted by the fact that the approximation space immediately locates
the isolated eigenvalue, and that the residual appears to have a large
component in the corresponding eigendirection. 
After that, the convergence behavior depends on the rest of the spectrum.
The original solver maintains the same convergence rate for all values of $t$.

\begin{table}
{
\centering
\begin{tabular}{|llrccc|} \hline
 $t$ & Spectral  & Space  &   $\|R_k\|_F$  &  $\| X-X_k\|_F$ &$\|X\|_F$ \\
   &  Region  & dim.  &    &   &    \\ 
\hline
$10^3$ &  ${\mathbb S}({\rm T}_k)$ & 21  & 1.8500e-10 & 1.6646e-13 & 4.9999e-03\\

     &  ${\mathbb S}({\cal T}_k)$ & 3 &  8.5599e-10 & 1.4389e-10 & \\
\hline
 $10^2$ &  ${\mathbb S}({\rm T}_k)$ & 23 & 3.1915e-10 & 3.0155e-13 & 4.9994e-02 \\

     &  ${\mathbb S}({\cal T}_k)$ & 7 &  4.9612e-10 &  1.0148e-10 & \\
\hline
  10 &  ${\mathbb S}({\rm T}_k)$ & 25 &  9.6706e-10 & 2.5302e-13 & 4.9938e-01 \\

     &  ${\mathbb S}({\cal T}_k)$ & 9 & 9.0853e-10 & 2.2998e-10 & \\
\hline
\end{tabular}
\caption{Example \ref{ex:sym}. Comparison of performance for $A$ symmetric.
Number of iterations for the two variants for the relative residual norm 
and final accuracies to go below $10^{-9}$.
\label{tab:sym}}
}
\end{table}
\end{example}

\begin{figure}[htb]
\centering
\includegraphics[width=2.0in, height=2.0in]{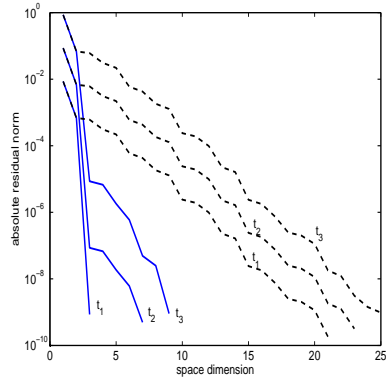}
\caption{{Example \ref{ex:sym}. Convergence history of rational Krylov method
with and without modified shift selection as $t$ varies. 
Solid curves: use of ${\cal T}_k$.
Dashed curves: use of ${T}_k$.
  } \label{fig:sym}
}
\end{figure}

%

By generalizing field of values results in 
\cite{Druskin.Knizhnerman.Simoncini.11},\cite{Beckermann.11}
it may be possible to exploit the semi-residual form to analyze the convergence
of the method, and its dependence on ${\cal T}_k$. A shortcoming in the analysis
is that the field of values of the non-Hermitian matrix
${\cal T}_k$ depends on $k$, and that its relation with
the field of values of $A^* - X BB^*$ is not easy to formalize,
especially at an early stage of the convergence history. 
Resorting to the residual expression in (\ref{eqn:resnew}), it
is possible to exploit some of the results available in the literature for
the Lyapunov equation. For instance,
if the field of values of $A^* - X BB^*$ and of $A^* - X_k BB^*$
is contained in a disk of center $c>0$ and radius equal to one for all $k$, then
using \cite[Theorem 4.11]{Druskin.Knizhnerman.Simoncini.11} we can state that
the error satisfies
$$
\overline{\lim}_{k\to\infty} \|X - X_k\|^{\frac 1 k} \le
\frac{2 c^2 + c-1 - (2c+1)\sqrt{c^2-1}}{c+1+\sqrt{c^2-1}} = : \gamma.
$$
The following example shows that this asymptotic bound can be descriptive
of the actual behavior.

\begin{example}\label{ex:grcar}
We consider $A=-1/(3.2)A_0 - I$ where
$A_0$ is the Grcar matrix, $A_0={\tt toeplitz}(-1,\underline{1},1,1,1)\in\RR^{n\times n}$,
$n=1600$, 
$C={\mathbf 1}/\|{\mathbf 1}\|$ and $B\in\RR^{n\times p}$, $p=20$ with normally
distributed random numbers, normalized so that its norm is about $5\cdot 10^{-2}$.
The left plot of Figure~\ref{fig:grcar} shows the computed spectrum of $A$ 
(`$\times$' symbol), that of $A^*-XBB^*$ (`$\circ$' symbol), the border of
the field of values of both $A$ and $A^*-XBB^*$ (thin line), and 
the circle of center $c=1.25$ and radius one,
enclosing the field of values. The right plot of Figure~\ref{fig:grcar} displays
the error norm history of the modified method (dashed line), and $10^{-2} \gamma^k$,
The convergence rate is well captured by the theoretical estimate $\gamma$
at the early stage of the iterations.

\end{example}

\begin{figure}[htb]
\centering
\includegraphics[width=2.0in, height=2.0in]{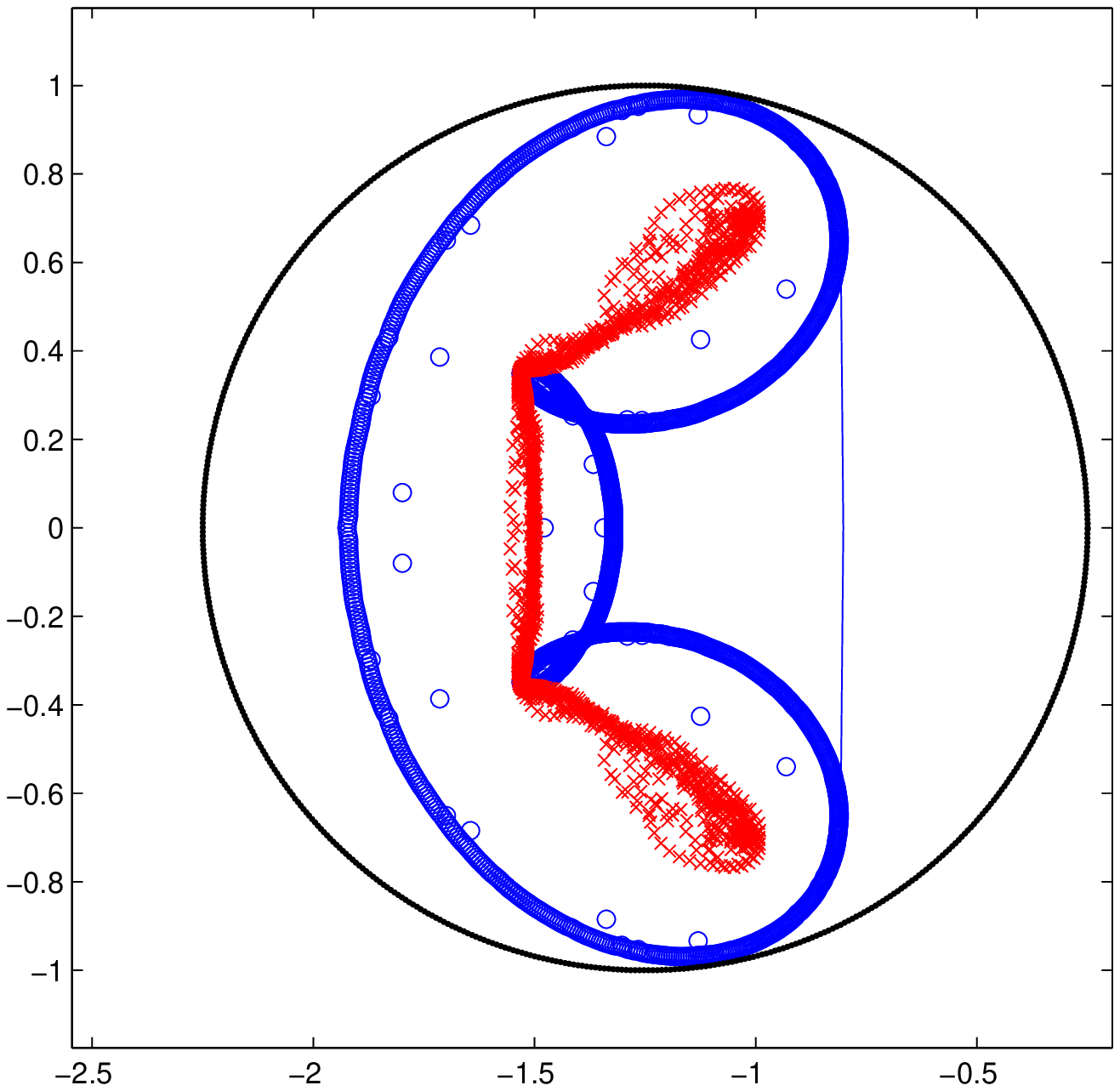}
\includegraphics[width=2.0in, height=2.0in]{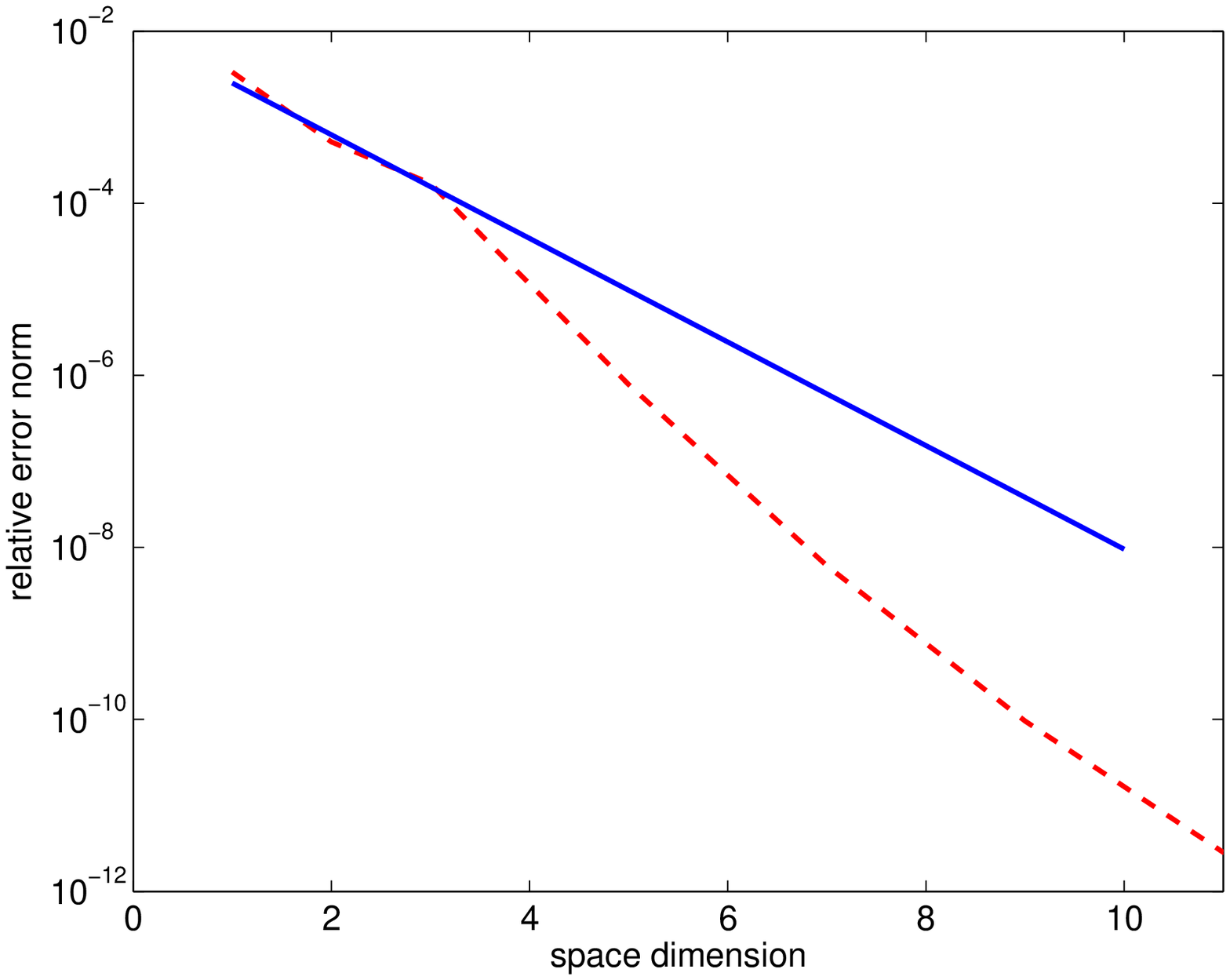}
\caption{Example \ref{ex:grcar}. Left: Field of values and eigenvalue location.
Right: Error norm convergence history and estimate $\gamma^k$.
   \label{fig:grcar} }
\end{figure}

%
%

\section{Approximation of an invariant subspace}\label{sec:invar}
In this section we discuss the natural, albeit gone
almost unnoticed, role of the approximation matrix  $X_k$ in the eigenvalue
context.
The problem of solving the large scale algebraic Riccati equation for $X\ge 0$
can be transformed into the problem of computing an approximate basis for
the stable invariant subspace of the following Hamiltonian matrix (see, e.g., 
\cite{Lancaster.Rodman.95})
\begin{eqnarray}\label{eqn:Hmain}
{\cal H} = 
\begin{bmatrix}
A & -BB^* \\ -C^* C & -A^*
\end{bmatrix} .
\end{eqnarray}
Several different approaches have been devised to this end, see, e.g.,
\cite{Amodei.Buchot.10},\cite{Benner.Bujanovic.16},\cite{Lin.Simoncini.15} and
references therein.
Here we show that the projection process described in the previous
sections can be equivalently applied to this context, providing
further motivation for the method.

Let $X_k$ be the approximate solution to (\ref{eqn:main}) obtained by the
rational Krylov subspace method.
For some $L \in \RR^{n\times n}$ consider the eigenvalue residual
$$
{\cal S}_k =
\begin{bmatrix}
A & -BB^* \\ -C^* C & -A^*
\end{bmatrix}
\begin{bmatrix}
I  \\ X_k
\end{bmatrix} -
\begin{bmatrix}
I  \\ X_k
\end{bmatrix} L.
$$
For $L= A- B^*B X_k$, the invariant space residual ${\cal S}_k$ and the
matrix equation residual $R_k$ can be easily related, since
$$
{\cal S}_k =
\begin{bmatrix}
A & -BB^* \\ -C^* C & -A^*
\end{bmatrix} 
\begin{bmatrix}
I  \\ X_k 
\end{bmatrix} -
\begin{bmatrix}
I  \\ X_k 
\end{bmatrix} ( A- B^*B X_k) 
=
\begin{bmatrix}
0  \\ R_k 
\end{bmatrix} , 
$$
so that
$\|{\cal S}_k\| = \|R_k\|$. As a consequence of Proposition \ref{prop:Xk_riccati}
the following result holds.

\begin{proposition}
The columns of the matrix $[I;X_k]$ span an invariant subspace of the matrix
$$
{\cal H}_k = \begin{bmatrix}
A- f_k \hat v_{k+1}^* & -BB^* \\ -C^* C & -(A - f_k\hat v_{k+1}^*)^*
\end{bmatrix} ,
$$
and the spectrum of $T_k^* - Y_k B_k B_k^*$ is a subset of the spectrum
of $A^* - X_k BB^* - \hat v_{k+1} f_k^*$.
\end{proposition}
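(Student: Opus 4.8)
The plan is to read off both assertions from the modified Riccati equation of Proposition~\ref{prop:Xk_riccati} combined with the Arnoldi-type relation of Remark~\ref{rem:inv}, using the classical correspondence between Riccati solutions and Hamiltonian invariant subspaces. Write $\widetilde A = A - f_k \hat v_{k+1}^*$, so that $\widetilde A^* = A^* - \hat v_{k+1} f_k^*$ and ${\cal H}_k$ is precisely the Hamiltonian matrix built from the triple $(\widetilde A, B, C)$. Proposition~\ref{prop:Xk_riccati} says exactly that $X_k$ solves $\widetilde A^* X + X \widetilde A - X BB^* X + C^* C = 0$.

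For the first claim I would carry out the standard block multiplication. Set $L := \widetilde A - BB^* X_k$. Then
$$
{\cal H}_k \begin{bmatrix} I \\ X_k \end{bmatrix}
= \begin{bmatrix} \widetilde A - BB^* X_k \\ -C^* C - \widetilde A^* X_k \end{bmatrix}
= \begin{bmatrix} L \\ X_k L - \bigl(\widetilde A^* X_k + X_k \widetilde A - X_k BB^* X_k + C^* C\bigr) \end{bmatrix},
$$
and the parenthesized term vanishes by the equation just recalled. Hence ${\cal H}_k [I;X_k] = [I;X_k] L$, which is the assertion that ${\rm range}([I;X_k])$ is ${\cal H}_k$-invariant, with $L$ representing the restriction.

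For the second claim, observe that $A^* - X_k BB^* - \hat v_{k+1} f_k^* = \widetilde A^* - X_k BB^* = {\cal A}_k^* - \hat v_{k+1} f_k^* = L^*$. Using $f_k = V_k g_k$, so $f_k^* V_k = g_k^* V_k^* V_k = g_k^*$, together with Remark~\ref{rem:inv}, namely ${\cal A}_k^* V_k = V_k {\cal T}_k^* + \hat v_{k+1} g_k^*$, one computes
$$
L^* V_k = {\cal A}_k^* V_k - \hat v_{k+1}\, f_k^* V_k = V_k {\cal T}_k^* + \hat v_{k+1} g_k^* - \hat v_{k+1} g_k^* = V_k {\cal T}_k^*.
$$
Thus ${\rm range}(V_k)$ is an invariant subspace of $L^*$ and the compression of $L^*$ to it is ${\cal T}_k^* = T_k^* - Y_k B_k B_k^*$; consequently every eigenvalue of $T_k^* - Y_k B_k B_k^*$ is an eigenvalue of $L^* = A^* - X_k BB^* - \hat v_{k+1} f_k^*$, which is the stated spectral inclusion.

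The computation is essentially routine verification; the only places asking for care are the bookkeeping of conjugate transposes when moving between $\widetilde A$ and $\widetilde A^*$, and the cancellation $\hat v_{k+1} g_k^* V_k^* V_k = \hat v_{k+1} g_k^*$ that collapses the Arnoldi remainder term when $L^*$ is restricted to ${\rm range}(V_k)$. I do not anticipate any genuine obstacle beyond keeping these adjoints straight.
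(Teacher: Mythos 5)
Your proposal is correct and follows essentially the same route as the paper: the first claim is the same block verification reading off the modified Riccati equation of Proposition~\ref{prop:Xk_riccati}, and the second claim uses the same Arnoldi relation together with the cancellation $f_k^*V_k = g_k^*$, merely phrased at the level of the matrix identity $L^*V_k = V_k{\cal T}_k^*$ rather than eigenpair by eigenpair as in the paper. No gaps.
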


\begin{proof}
Writing the eigenresidual
$$
{\cal S}_k =
\begin{bmatrix}
A- f_k \hat v_{k+1}^* & -BB^* \\ -C^* C & -(A - f_k\hat v_{k+1}^*)^*
\end{bmatrix} 
\begin{bmatrix}
I  \\ X_k 
\end{bmatrix} -
\begin{bmatrix}
I  \\ X_k 
\end{bmatrix} ( A- f_k \hat v_{k+1}^*- B^*B X_k) 
$$
and using Proposition \ref{prop:Xk_riccati} we readily see that ${\cal S}_k = 0$.

To prove the second assertion, we use the Arnoldi relation in (\ref{eqn:arnoldi}).
Let $(\theta,z)$ be an eigenpair of $T^* - Y_k B_k B_k^*$. Then
\begin{eqnarray*}
(A^* - \hat v_{k+1} f_k^* - X_k BB^* ) V_k z &=&
(A^* V_k - \hat v_{k+1} f_k^*V_k - X_k BB^*V_k )  z  \\
&=& (V_k T_k^* + \hat v_{k+1} g_k^* - \hat v_{k+1} f_k^*V_k - X_k BB^*V_k )  z  \\
&=& (V_k T_k^* - X_k BB^*V_k )  z = V_k (T_k^* - Y_k B_k B_k^*) z = V_k z \theta,
\end{eqnarray*}
and the result follows.
\end{proof}

The result above states that the approximate Riccati solution is associated
with an invariant subspace of a modification of the original matrix in (\ref{eqn:Hmain}), 
and that the spectrum of $T_k^* - Y_k B_k B_k^*$ is a portion of
the spectrum of this modified problem.
These properties are a consequence of the Arnoldi relation (\ref{eqn:arnoldi}), which indeed
states that $V_k$ is an invariant subspace basis of a modification of $A^*$, namely
of $A^* - \hat v_{k+1} f_k^*$. What is noticeable in our context is that
we can relate the spectral region over which we seek the next shift in (\ref{eqn:news})
with the spectral region of a relevant matrix back in $\RR^n$.

The  approximation process leading to the computation of $Y_k$
can be interpreted as a Galerkin method for the eigenvalue
problem associated with ${\cal H}$. Consider the space
$$
{\mathbb V}_k = {\rm range}\left ( \begin{bmatrix} V_k & 0 \\ 0 & V_k\end{bmatrix} \right ) =:
{\rm range}({\cal V}_k).
$$
Then by projecting ${\cal H}$ onto the space we obtain, 
$$
{\cal V}_k^* {\cal H} {\cal V}_k =
\begin{bmatrix}
V_k^*AV_k & - V_k^* BB^*V_k \\ -V^* C^*CV_k & - V_k^*A^*V_k
\end{bmatrix}
=
\begin{bmatrix}
T_k & - B_kB_k^* \\ -C_k^*C_k & - T_k^*
\end{bmatrix}.
$$
The block matrix on the right-hand side is the Hamiltonian matrix
associated with the reduced system. 
Using the reduced Riccati equation, it holds that
$$
\begin{bmatrix}
T_k & - B_kB_k^* \\ -C_k^*C_k & - T_k^*
\end{bmatrix}
\begin{bmatrix}
I \\ Y_k 
\end{bmatrix} 
=
\begin{bmatrix}
I \\ Y_k 
\end{bmatrix} 
(T_k - B_kB_k^* Y_k) ,
$$
with $Y_k$ stabilizing.
%
In terms of original space dimensions,
let ${\cal V}_k [I;Y_k] = [V_k; V_k Y_k]$ be the computed approximate eigenbasis.
Then the residual is given by
\begin{eqnarray*}
{\cal S}_k &=& 
\begin{bmatrix}
A & -BB^* \\ -C^* C & -A^*
\end{bmatrix} 
\begin{bmatrix}
V_k  \\ V_k Y_k 
\end{bmatrix} -
\begin{bmatrix}
V_k  \\ V_k  Y_k
\end{bmatrix} ( T_k- B_k B_k^* Y_k)  .
\end{eqnarray*}
It readily follows that the eigenresidual is orthogonal to the generated space, that is
it holds that $({\cal V}_k)^* {\cal S}_k = 0$, therefore it satisfies a 
standard Galerkin condition.
As a consequence, for $T_k - B_k B_k^* Y_k$ stable,  ${\cal V}_k [I;Y_k]$
approximates a basis of a stable invariant subspace of the matrix ${\cal H}$ in the sense of
Galerkin projection methods.

\section{Conclusions}\label{sec:conclusions}
By looking at the problem from different but highly related perspectives,
we have shown that projection methods are a natural device
for solving the algebraic Riccati equation.
In particular, the reduced equation solves a reduced linear-quadratic
optimization problem, as is typical of model order reduction techniques.
By using classical arguments, we have related the residual with the error of
the current approximation. Moreover, we have derived a new expression for the
residual in terms of rational functions; this expression allows us to justify
recent algorithmic strategies for the choice of the shift parameters
used in the construction of the approximation space.
In addition, this expression highlights the role of the quadratic term,
and explains why it often happens that good convergence occurs even
without taking the quadratic term into account during the construction
of the approximation space.
The new relations for the residual in terms of rational functions can
be the starting point for a convergence analysis of the method. 
We notice that while we have focussed on generic rational Krylov subspaces
in section \ref{sec:RKSM}, many of the stated results hold for other choices of
approximation spaces, and in particular for polynomial and extended 
Krylov subspaces.

Finally, we have shown that the computed quantities correspond to
a Galerkin approximation of the eigenvalue problem associated with
the Hamiltonian matrix of the dynamical system.

\section*{Acknowledgements}
We would like to thank Dario Bini and Daniel Szyld for insightful comments.
This research is supported in part by the FARB12SIMO grant of the Universit\`a di Bologna,
and by INdAM-GNCS under the
2016 Project  \emph{Equazioni e funzioni di matrici con struttura: analisi
e algoritmi}.

\bibliography{%
/home/valeria/Bibl/Biblioteca}

\end{document}